\documentclass[final,3p,times]{elsarticle}
\usepackage{amssymb}
\usepackage{amsmath}
\usepackage{amsthm}

\DeclareMathAlphabet{\mathcal}{OMS}{cmsy}{m}{n}

\newcommand{\bbC}{\mathbb{C}}
\newcommand{\bbF}{\mathbb{F}}
\newcommand{\bbR}{\mathbb{R}}

\newcommand{\bfA}{\mathbf{A}}
\newcommand{\bfG}{\mathbf{G}}
\newcommand{\bfI}{\mathbf{I}}
\newcommand{\bfx}{\mathbf{x}}
\newcommand{\bfX}{\mathbf{X}}
\newcommand{\bfy}{\mathbf{y}}

\newcommand{\bfalpha}{\boldsymbol\alpha}
\newcommand{\bflambda}{\boldsymbol\lambda}
\newcommand{\bfmu}{\boldsymbol\mu}
\newcommand{\bfphi}{\boldsymbol\varphi}
\newcommand{\bfpsi}{\boldsymbol\psi}
\newcommand{\bfPhi}{\boldsymbol\Phi}
\newcommand{\bfPsi}{\boldsymbol\Psi}

\newcommand{\calJ}{\mathcal{J}}

\newcommand{\Tr}{\mathrm{Tr}}

\newcommand{\abs}[1]{|{#1}|}

\newcommand{\bigparen}[1]{\bigl({#1}\bigr)}

\newcommand{\Biggparen}[1]{\Biggl({#1}\Biggr)}

\newcommand{\set}[1]{\{{#1}\}}
\newcommand{\bigset}[1]{\bigl\{{#1}\bigr\}}
\newcommand{\Bigset}[1]{\Bigl\{{#1}\Bigr\}}

\newcommand{\Biggset}[1]{\Biggl\{{#1}\Biggr\}}

\newcommand{\norm}[1]{\|{#1}\|}

\newcommand{\ip}[2]{\langle{#1},{#2}\rangle}

\theoremstyle{plain}
\newtheorem{theorem}{Theorem}
\newtheorem{lemma}{Lemma}

\theoremstyle{definition}
\newtheorem{definition}{Definition}
\newtheorem{example}{Example}

\hyphenation{se-mi-def-i-nite}

\begin{document}
\begin{frontmatter}
\title{A generalized Schur-Horn theorem and optimal frame completions}

\author[AFIT]{Matthew Fickus}
\ead{Matthew.Fickus@gmail.com}
\author[Bowdoin]{Justin Marks}
\author[AFIT]{Miriam J.~Poteet}

\address[AFIT]{Department of Mathematics and Statistics, Air Force Institute of Technology, Wright-Patterson Air Force Base, OH 45433, USA}
\address[Bowdoin]{Department of Mathematics, Bowdoin College, Brunswick, ME 04011, USA}

\begin{abstract}
The Schur-Horn theorem is a classical result in matrix analysis which characterizes the existence of positive semidefinite matrices with a given diagonal and spectrum.
In recent years, this theorem has been used to characterize the existence of finite frames whose elements have given lengths and whose frame operator has a given spectrum.
We provide a new generalization of the Schur-Horn theorem which characterizes the spectra of all possible finite frame completions.
That is, we characterize the spectra of the frame operators of the finite frames obtained by adding new vectors of given lengths to an existing frame.
We then exploit this characterization to give a new and simple algorithm for computing the optimal such completion.
\end{abstract}

\begin{keyword}
Schur-Horn \sep frame \sep completion   \MSC[2010] 42C15
\end{keyword}
\end{frontmatter}

\section{Introduction}
The Schur-Horn theorem~\cite{Horn54,Schur23} is a classical result in matrix analysis which characterizes the existence of positive-semidefinite  matrices with a given diagonal and spectrum.
To be precise, let $\bbF$ be either the real field $\bbR$ or the complex field $\bbC$, and let $\set{\lambda_n}_{n=1}^{N}$ and $\set{\mu_n}_{n=1}^{N}$ be any nonincreasing sequences of nonnegative real scalars.
The Schur-Horn theorem states that there exists a positive semidefinite matrix $\bfG\in\bbF^{N\times N}$ with eigenvalues $\set{\lambda_n}_{n=1}^{N}$ and with $\bfG(n,n)=\mu_n$ for all $n=1,\dotsc,N$ if and only if $\set{\lambda_n}_{n=1}^{N}$ \textit{majorizes} $\set{\mu_n}_{n=1}^{N}$, that is, precisely when
\begin{equation}
\label{equation.classical Schur-Horn}
\sum_{n=1}^{N}\mu_n=\sum_{n=1}^{N}\lambda_n,
\qquad
\sum_{n=1}^{j}\mu_n\leq\sum_{n=1}^{j}\lambda_n,\quad \forall j=1,\dotsc,N,
\end{equation}
denoted $\set{\mu_n}_{n=1}^{N}\preceq\set{\lambda_n}_{n=1}^{N}$.
The first part of~\eqref{equation.classical Schur-Horn} is simply a trace condition: the sum of the diagonal entries of $\bfG$ must equal the sum of its eigenvalues.
The second part of~\eqref{equation.classical Schur-Horn} is less intuitive.
To understand it better, it helps to have some basic concepts from finite frame theory.

For any finite sequence of vectors \smash{$\set{\bfphi_n}_{n=1}^{N}$} in $\bbF^M$, the corresponding \textit{synthesis operator} is the $M\times N$ matrix whose $n$th column is $\bfphi_n$, namely $\bfPhi:\bbF^N\rightarrow\bbF^M$, $\bfPhi\bfy:=\sum_{n=1}^{N}\bfy(n)\bfphi_n$.
Its $N\times M$ adjoint is the \textit{analysis operator} $\bfPhi^*:\bbF^M\rightarrow\bbF^N$, $(\bfPhi^*\bfx)(n):=\ip{\bfphi_n}{\bfx}$.
The vectors $\set{\bfphi_n}_{n=1}^{N}$ are a \textit{finite frame} for $\bbF^M$ if they span $\bbF^M$, which is equivalent to having their $M\times M$ \textit{frame operator} \smash{$\bfPhi\bfPhi^*=\sum_{n=1}^{N}\bfphi_n^{}\bfphi_n^*$} be invertible.
Here, $\bfphi_n^*$ is $1\times M$ adjoint of the $M\times 1$ column vector $\bfphi_n$, namely the linear operator $\bfphi_n^*\bfx=\ip{\bfphi_n}{\bfx}$.
The least and greatest eigenvalues $\alpha$ and $\beta$ of $\bfPhi\bfPhi^*$ are called the \textit{lower} and \textit{upper frame bounds} of $\set{\bfphi_n}_{n=1}^{N}$, and their ratio $\beta/\alpha$ is the \textit{condition number} of $\bfPhi\bfPhi^*$.
Inspired by applications involving additive noise, finite frame theorists often seek frames that are as well-conditioned as possible, the ideal case being \textit{tight frames} in which $\bfPhi\bfPhi^*=\alpha\bfI$ for some $\alpha>0$.
They also care about the lengths of the frame vectors, often requiring that $\norm{\bfphi_n}^2=\mu_n$ for some prescribed sequence $\set{\mu_n}_{n=1}^{N}$.
These lengths weight the summands of the linear-least-squares objective function \smash{$\norm{\bfPhi^*\bfx-\bfy}^2=\sum_{n=1}^{N}\abs{\ip{\bfphi_n}{\bfx}-\bfy(n)}^2$}, and adjusting them is closely related to the linear-algebraic concept of \textit{preconditioning}.
That is, we often want to control both the spectrum of the frame operator as well as the lengths of the frame vectors.
For example, much attention has been paid to finite tight frames whose vectors are unit norm~\cite{BenedettoF03,CasazzaK03,GoyalKK01,GoyalVT98}.

In this context, the reason we care about the Schur-Horn theorem is that it provides a simple characterization of when there exists a finite frame whose frame operator has a given spectrum and whose frame vectors have given lengths.
To elaborate, the earliest reference which briefly mentions the Schur-Horn theorem in the context of finite frames seems to be~\cite{TroppDHS05},
which stems from even earlier, closely related work on synchronous CMDA systems~\cite{ViswanathA99,ViswanathA02}.
An in-depth analysis of the connection between frame theory and the Schur-Horn theorem is given in~\cite{AntezanaMRS07}.
There as here, the main idea is to apply the Schur-Horn theorem to the \textit{Gram matrix} of a given sequence of vectors $\set{\bfphi_n}_{n=1}^{N}$, namely the $N\times N$ matrix $\bfPhi^*\bfPhi$ whose $(n,n')$th entry is $(\bfPhi^*\bfPhi)(n,n')=\ip{\bfphi_n}{\bfphi_{n'}}$.
Indeed, suppose there exists $\set{\bfphi_n}_{n=1}^{N}$ in $\bbF^M$ whose frame operator $\bfPhi\bfPhi^*$ has spectrum $\set{\lambda_m}_{m=1}^{M}$ and whose frame vectors have squared-norms $\norm{\bfphi_n}^2=\mu_n$ for all $n=1,\dotsc,N$.
The diagonal entries of $\bfPhi^*\bfPhi$ are $\set{(\bfPhi^*\bfPhi)(n,n)}_{n=1}^{N}=\set{\norm{\bfphi_n}^2}_{n=1}^N=\set{\mu_n}_{n=1}^{N}$ which, by reordering the frame vectors if necessary, we can assume are nonincreasing.
Meanwhile, the spectra of the Gram matrix $\bfPhi^*\bfPhi$ and the frame operator $\bfPhi\bfPhi^*$ are zero-padded versions of each other.
Since adjoining vectors of squared-length $\mu_n=0$ to a sequence $\set{\bfphi_n}_{n=1}^{N}$ does not change its $M\times M$ frame operator $\bfPhi\bfPhi^*$ we further assume without loss of generality that $M\leq N$, implying that the spectrum of $\bfPhi^*\bfPhi$ is $\set{\lambda_m}_{m=1}^{M}$ appended with $N-M$ zeros.
Applying the Schur-Horn theorem to $\bfPhi^*\bfPhi$ then implies that $\set{\lambda_m}_{m=1}^{M}\cup\set{0}_{m=M+1}^{N}$ necessarily majorizes $\set{\mu_n}_{n=1}^{N}$, with~\eqref{equation.classical Schur-Horn} reducing to
\begin{equation}
\label{equation.frame theory Schur-Horn}
\sum_{n=1}^{N}\mu_n=\sum_{m=1}^{M}\lambda_m,
\qquad
\sum_{n=1}^{j}\mu_n\leq\sum_{m=1}^{j}\lambda_m,\quad \forall j=1,\dotsc,M.
\end{equation}
Conversely, for any $M\leq N$ and any nonnegative nonincreasing sequences $\set{\lambda_m}_{m=1}^{M}$ and $\set{\mu_n}_{n=1}^{N}$ that satisfy~\eqref{equation.frame theory Schur-Horn}, the Schur-Horn theorem also implies that there exists a positive semidefinite matrix with spectrum $\set{\lambda_m}_{m=1}^{M}\cup\set{0}_{m=M+1}^{N}$ and with diagonal entries $\set{\mu_n}_{n=1}^{N}$.
Since the rank of $\bfG$ is at most $M$, taking the singular value decomposition of $\bfG$ allows it to be written as $\bfG=\bfPhi^*\bfPhi$ where $\bfPhi\in\bbF^{M\times N}$ has singular values \smash{$\set{\lambda_m^{1/2}}_{m=1}^{M}$}.
Letting $\set{\bfphi_n}_{n=1}^{N}$ denote the columns of this matrix $\bfPhi$, we see that there exists $N$ vectors in $\bbF^M$ whose frame operator $\bfPhi\bfPhi^*$ has spectrum $\set{\lambda_m}_{m=1}^{M}$ and where $\norm{\bfphi_n}^2=\mu_n$ for all $n=1,\dotsc,N$.

In summary, for any $M\leq N$ and any nonnegative nonincreasing sequences $\set{\lambda_m}_{m=1}^{M}$ and $\set{\mu_n}_{n=1}^{N}$, the Schur-Horn theorem gives that there exists $\set{\bfphi_n}_{n=1}^{N}$ in $\bbF^M$ where $\bfPhi\bfPhi^*$ has spectrum $\set{\lambda_m}_{m=1}^{M}$ and where $\norm{\bfphi_n}^2=\mu_n$ for all $n$ if and only if~\eqref{equation.frame theory Schur-Horn} holds.
Note that in the $M=N$ case, this statement reduces the classical Schur-Horn theorem and as such, is an equivalent formulation of it.
This equivalence allows the Schur-Horn and finite frame theory communities to contribute to each other.
For example, the Schur-Horn theorem gives frame theorists another reason why there exists a unit norm tight frame of $N$ vectors in $\bbF^M$ for any $M\leq N$: the sequence $\set{\lambda_m}_{m=1}^{M}=\set{\frac NM}_{m=1}^{M}\cup\set{0}_{m=M+1}^{N}$ majorizes the constant sequence $\set{\mu_n}_{n=1}^{N}=\set{1}_{n=1}^{N}$.
In the other direction, techniques originally developed to characterize the existence of finite frames, such as the Givens-rotation-based constructions of~\cite{CasazzaL02} and the optimization-based methods of~\cite{CasazzaFKLT06}, are meaningful contributions to the existing ``proof of Schur-Horn" literature~\cite{ChanL83,Chu95,DavisH00,DhillonHST05,HornJ85,LeiteRT99}.

Frame theory also provides the Schur-Horn community with a geometric interpretation of the inequalities in~\eqref{equation.classical Schur-Horn} and~\eqref{equation.frame theory Schur-Horn}.
To be precise, for any vectors $\set{\bfphi_n}_{n=1}^{N}$ in $\bbF^M$ and any $j=1,\dotsc,M$, the quantity \smash{$\sum_{n=1}^{j}\mu_n$} is the trace of the \textit{$j$th partial frame operator} $\bfPhi_j^{}\bfPhi_j^*$, where $\bfPhi_j$ denotes the synthesis operator of \smash{$\set{\bfphi_n}_{n=1}^{j}$}:
\begin{equation}
\label{equation.trace condition}
\sum_{n=1}^{j}\mu_n
=\sum_{n=1}^{j}\norm{\bfphi_n}^2
=\sum_{n=1}^{j}\bfphi_n^*\bfphi_n^{}
=\sum_{n=1}^{j}\Tr(\bfphi_n^*\bfphi_n^{})
=\sum_{n=1}^{j}\Tr(\bfphi_n^{}\bfphi_n^*)
=\Tr\Biggparen{\sum_{n=1}^{j}\bfphi_n^{}\bfphi_n^*}
=\Tr(\bfPhi_j^{}\bfPhi_j^*).
\end{equation}
Here, the $n$th summand of \smash{$\bfPhi_j^{}\bfPhi_j^*=\sum_{n=1}^{j}\bfphi_n^{}\bfphi_n^*$} is the orthogonal projection operator onto the line spanned by $\bfphi_n$, scaled by a factor of $\norm{\bfphi_n}^2=\mu_n$.
Since the vectors $\set{\bfphi_n}_{n=1}^{j}$ span at most a $j$-dimensional space, all but $j$ of the eigenvalues of $\bfPhi_j^{}\bfPhi_j^*$ are zero.
As such, \smash{$\Tr(\bfPhi_j^{}\bfPhi_j^*)=\sum_{n=1}^{j}\mu_n$} is the sum of the $j$ largest eigenvalues of $\bfPhi_j^{}\bfPhi_j^*$.
Moreover, as we add the remaining scaled-projections \smash{$\set{\bfphi_n^{}\bfphi_n^*}_{n=j+1}^{N}$} to $\bfPhi_j^{}\bfPhi_j^*$ in order to form $\bfPhi\bfPhi^*$, these $j$ largest eigenvalues will only grow larger, leading to the $j$th inequality in~\eqref{equation.frame theory Schur-Horn};
formally this follows from the rules of \textit{eigenvalue interlacing}, as detailed in the next section.

The remarkable fact about the Schur-Horn theorem is that these relatively easy-to-derive necessary conditions~\eqref{equation.frame theory Schur-Horn} are also sufficient.
Many of the traditional proofs of the sufficiency of~\eqref{equation.frame theory Schur-Horn} involve explicit constructions.
And, of these, only the recently-introduced \textit{eigenstep}-based construction method of~\cite{CahillFMPS13,FickusMPS13} is truly general in the sense that for a given $\set{\lambda_m}_{m=1}^{M}$ and $\set{\mu_n}_{n=1}^{N}$ it can construct every finite frame of the corresponding type.
In this paper, we further exploit the power of the eigensteps method, generalizing the Schur-Horn theorem so that it applies to another type of problem in finite frame theory.

In particular, in this paper we derive a generalized Schur-Horn theorem that addresses the \textit{frame completion problem}: given an initial frame, which new vectors should be appended to it in order to make it a better frame?
More precisely, given an initial sequence of vectors whose frame operator is some $M\times M$ positive semidefinite matrix $\bfA$, how should we choose
$\set{\bfphi_n}_{n=1}^{N}$ so that the frame operator of the entire collection, namely $\bfA+\sum_{n=1}^{N}\bfphi_n^{}\bfphi_n^*$, is optimally well-conditioned?
Finite frames have been used to model sensor networks~\cite{RanieriCV14}; from that perspective, the completion problem asks what sensors should we add to an existing sensor network so that the new network is as robust as possible against measurement error and noise.

The frame completion problem was first considered in~\cite{FengLY06}.
There, the authors characterized the smallest number $N$ of new vectors that permits $\bfA+\sum_{n=1}^{N}\bfphi_n^{}\bfphi_n^*$ to be tight, provided $\set{\bfphi_n}_{n=1}^{N}$ can be arbitrarily chosen.
They also gave a lower bound on the smallest such $N$ in the case where each $\bfphi_n$ is required to have unit norm.
Shortly thereafter in~\cite{MasseyR08}, the classical Schur-Horn theorem was used to completely characterize the smallest such $N$ in the case where the squared-norms of $\set{\bfphi_n}_{n=1}^{N}$ are some arbitrary nonnegative nonincreasing values $\set{\mu_n}_{n=1}^{N}$.
This prior work naturally leads to several new problems, a couple of which we solve in this paper.
It helps here to introduce some terminology:
\begin{definition}
\label{definition.completion}
Given nonnegative nonincreasing sequences \smash{$\bfalpha=\set{\alpha_m}_{m=1}^{M}$} and \smash{$\bfmu=\set{\mu_n}_{n=1}^{N}$}, we say a nonnegative nonincreasing sequence $\bflambda=\set{\lambda_m}_{m=1}^{M}$ is an \textit{$(\bfalpha,\bfmu)$-completion} if $\bflambda$ is the spectrum of some operator of the form \smash{$\bfA+\sum_{n=1}^{N}\bfphi_n^{}\bfphi_n^*$} where $\bfA$ is a self-adjoint matrix with spectrum $\bfalpha$ and where $\norm{\bfphi_n}^2=\mu_n$ for all $n=1,\dotsc,N$.
\end{definition}
Our first main result characterizes all $(\bfalpha,\bfmu)$-completions via a generalized Schur-Horn theorem.
\begin{theorem}
\label{theorem.main result 1}
For any nonnegative nonincreasing sequences $\bfalpha=\set{\alpha_m}_{m=1}^{M}$ and $\bfmu=\set{\mu_n}_{n=1}^{N}$, a nonnegative nonincreasing sequence $\set{\lambda_m}_{m=1}^{M}$ is an $(\bfalpha,\bfmu)$-completion if and only if $\lambda_m\geq\alpha_m$ for all $m$ and:
\begin{equation}
\label{equation.completion Schur-Horn}
\sum_{m=1}^{M}(\lambda_m-\alpha_m)=\sum_{n=1}^{N}\mu_n,
\qquad
\sum_{m=j}^{M}(\lambda_m-\alpha_{m-j+1})^+\leq \sum_{n=j}^{N}\mu_n,\quad\forall j=1,\dotsc,M.
\end{equation}
\end{theorem}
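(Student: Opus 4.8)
The plan is to prove the two directions separately, using throughout the basic fact (already visible in~\eqref{equation.trace condition}) that appending one vector $\bfphi$ to a family changes the spectrum of its frame operator by adding the rank-one positive semidefinite operator $\bfphi\bfphi^*$, so that the spectra of consecutive partial frame operators interlace and their traces differ by $\norm{\bfphi}^2$.

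\emph{Necessity.} Suppose $\bflambda$ is the spectrum of $\bfA+\sum_{n=1}^{N}\bfphi_n^{}\bfphi_n^*$ where $\bfA$ is self-adjoint with spectrum $\bfalpha$ and $\norm{\bfphi_n}^2=\mu_n$. Since $\sum_{n=1}^{N}\bfphi_n^{}\bfphi_n^*\geq 0$, Weyl's monotonicity inequality gives $\lambda_m\geq\alpha_m$, and the first identity in~\eqref{equation.completion Schur-Horn} is just~\eqref{equation.trace condition}. For the inequalities, fix $j$ and let $\mathbf{B}:=\bfA+\sum_{n=1}^{j-1}\bfphi_n^{}\bfphi_n^*$ have nonincreasing spectrum $\set{\beta_m}_{m=1}^{M}$. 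Since $\sum_{n=1}^{j-1}\bfphi_n^{}\bfphi_n^*$ is positive semidefinite of rank at most $j-1$, Weyl's inequalities give $\alpha_m\leq\beta_m$ for all $m$ and $\beta_m\leq\alpha_{m-j+1}$ for $m\geq j$; and applying Weyl's monotonicity inequality to $\mathbf{B}$ and $\sum_{n=j}^{N}\bfphi_n^{}\bfphi_n^*\geq 0$ gives $\lambda_m\geq\beta_m$. The trace of the latter sum is $\sum_{n=j}^{N}\mu_n=\sum_{m=1}^{M}(\lambda_m-\beta_m)$, so discarding the nonnegative terms with $m<j$ and using $\beta_m\leq\alpha_{m-j+1}$ together with $\lambda_m-\beta_m\geq 0$ yields $\sum_{n=j}^{N}\mu_n\geq\sum_{m=j}^{M}(\lambda_m-\beta_m)\geq\sum_{m=j}^{M}(\lambda_m-\alpha_{m-j+1})^{+}$.

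\emph{Sufficiency.} For the converse I would reduce to the eigenstep machinery of~\cite{CahillFMPS13,FickusMPS13}: it suffices to produce nonincreasing nonnegative sequences $\bflambda^{(0)}=\bfalpha,\bflambda^{(1)},\dotsc,\bflambda^{(N)}=\bflambda$, each of length $M$, such that for each $n$ the sequences $\bflambda^{(n-1)}$ and $\bflambda^{(n)}$ interlace, i.e.\ $\lambda_m^{(n)}\geq\lambda_m^{(n-1)}\geq\lambda_{m+1}^{(n)}$ for all $m$, with $\sum_{m=1}^{M}\bigparen{\lambda_m^{(n)}-\lambda_m^{(n-1)}}=\mu_n$; starting from any fixed self-adjoint matrix with spectrum $\bfalpha$, the eigenstep construction then produces vectors $\set{\bfphi_n}_{n=1}^{N}$ of the prescribed lengths realizing these spectra one at a time, exhibiting $\bflambda$ as an $(\bfalpha,\bfmu)$-completion. (This reduction loses nothing: by the necessity argument, the spectra of the partial frame operators of any completion form such a sequence.) I would then build the sequence by induction on $N$, peeling off the first vector. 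The cases $N\leq 1$ are a direct check, and for $N\geq 2$ the task is to choose $\bflambda^{(1)}$ interlacing $\bfalpha$ with $\sum_m(\lambda_m^{(1)}-\alpha_m)=\mu_1$ so that the reduced data $(\bflambda^{(1)},\set{\mu_n}_{n=2}^{N},\bflambda)$ again satisfies the hypotheses of the theorem, whereupon the inductive hypothesis finishes the construction from $\bflambda^{(1)}$ to $\bflambda$.

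\emph{The hard part} is this choice of $\bflambda^{(1)}$. The natural candidate is a water-filling prescription: take $\lambda_m^{(1)}$ to be its largest admissible value $\min(\alpha_{m-1},\lambda_m)$ for $m\geq 2$ insofar as the budget $\mu_1$ allows, routing any surplus into $\lambda_1^{(1)}$ (and, in the opposite regime, keeping $\lambda_1^{(1)}$ as small as its own constraints permit). With this choice the interlacing with $\bfalpha$, the bound $\lambda_m^{(1)}\leq\lambda_m$, and the $j=1$ reduced inequality are routine (the last reduces to the trace identity, the others using the $j=2$ case of~\eqref{equation.completion Schur-Horn}). The obstacle is the reduced inequalities $\sum_{m=j}^{M}\bigparen{\lambda_m-\lambda_{m-j+1}^{(1)}}^{+}\leq\sum_{n=j+1}^{N}\mu_n$ for $j\geq 2$: rewriting the left side as $\sum_{m=j}^{M}(\lambda_m-\alpha_{m-j+1})^{+}$ minus a ``savings'' term $\sum_{k}\bigbracket{\min(\lambda_{k+j-1},\lambda_k^{(1)})-\min(\lambda_{k+j-1},\alpha_k)}$, one needs each such savings to be at least $\mu_j$, and small examples show that the obviously greedy choices of $\bflambda^{(1)}$ do not deliver this. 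Making the bound go through --- equivalently, showing via Farkas/linear-programming duality that the polytope of admissible $\bflambda^{(1)}$ is nonempty precisely when~\eqref{equation.completion Schur-Horn} holds --- requires distributing the slack present in~\eqref{equation.completion Schur-Horn} across the indices $j$ in a balanced way, and the hypothesis that $\bfmu$ is nonincreasing is essential here. I expect this to be the technical heart of the proof.
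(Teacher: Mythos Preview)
Your necessity argument is correct and essentially matches the paper's: both pass through the spectrum of the partial sum $\bfA+\sum_{n=1}^{j-1}\bfphi_n^{}\bfphi_n^*$, bound its $m$th eigenvalue above by $\alpha_{m-j+1}$ via interlacing/Weyl, and read off the $j$th inequality from the trace identity. (One small gap: when $N<j\leq M$ there are only $N$ vectors, so your $\mathbf{B}$ should be the full completion; then the rank bound gives $\lambda_m\leq\alpha_{m-N}\leq\alpha_{m-j+1}$ and the right-hand side is zero.)

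Your reduction of sufficiency to building a chain of interlacing spectra (eigensteps) is also what the paper does. But the proposal leaves the decisive step unproved: you correctly isolate the problem of choosing $\bflambda^{(1)}$ so that the reduced data again satisfies~\eqref{equation.completion Schur-Horn}, observe that greedy water-filling fails, and then stop, gesturing at LP duality. That is a genuine gap---the substance of the theorem is exactly this constructive step, and you have not supplied it.

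The paper fills the gap, but with one structural difference from your plan: it peels off $\mu_N$ (the \emph{smallest} length) and works \emph{backward} from $\bflambda$ toward $\bfalpha$, rather than peeling off $\mu_1$ and working forward. Given $\bflambda$, it defines ``chopped spectra'' $\eta_{p;m}=\max\set{\lambda_{m+1},\min\set{\lambda_m,\alpha_{m-p+1}}}$ for $p=1,\dotsc,M{+}1$, shows the target trace $\sum_m\alpha_m+\sum_{n=1}^{N-1}\mu_n$ lies between two consecutive chopped traces (this uses the $j=2$ case of~\eqref{equation.completion Schur-Horn} together with $\mu_1\geq\mu_N$), and takes $\bflambda^{(N-1)}=\kappa$ to be any sequence with $\eta_{p;m}\leq\kappa_m\leq\eta_{p+1;m}$ and the correct trace. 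The verification that $(\bfalpha,\set{\mu_n}_{n=1}^{N-1},\kappa)$ again satisfies~\eqref{equation.completion Schur-Horn} (Lemma~\ref{lemma.Chop Kill step}) is a case analysis in $j$ versus $p$: for $j>p$ the reduced inequality is controlled by the $(j{+}1)$th original inequality, while for $j\leq p$ it equals the $j$th original quantity minus exactly $\mu_N$. Keeping $\bfalpha$ fixed throughout the induction---rather than $\bflambda$, as in your forward scheme---is what makes the left-hand sides of~\eqref{equation.completion Schur-Horn} comparable across steps. Your forward induction may be workable, but you would have to produce the analogue of this lemma, and the paper's chopped-spectrum argument does not transpose to it directly.
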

Here, $x^+:=\max\set{0,x}$ denotes the positive part of a real scalar $x$.
Moreover, note here we have made no assumption that $M\leq N$; in the case where $N<j\leq M$, the sums on the right-hand side of~\eqref{equation.completion Schur-Horn} are taken over an empty set of indices and, like all other empty sums in this paper, are defined by convention to be zero.
This convention is consistent with defining $\mu_n:=0$ for all $n>N$, though we choose not to interpret this particular result in this way in order to facilitate its proof.
Note that under this convention, \eqref{equation.completion Schur-Horn} holds for a given $j$ such that $N<j\leq M$ if and only if $\lambda_m\leq\alpha_{m-j+1}$ for all $m=j,\dotsc,M$.
We also remark on an aspect of Theorem~\ref{theorem.main result 1} that one of the anonymous reviewers kindly pointed out: 
the condition that $\alpha_m\leq\lambda_m$ for all $m$ is superfluous, being implied by~\eqref{equation.completion Schur-Horn}.
Indeed, combining the equality condition of~\eqref{equation.completion Schur-Horn} with the inequality condition when $j=1$ gives 
$\sum_{m=1}^{M}(\lambda_m-\alpha_m)^+\leq \sum_{n=1}^{N}\mu_n=\sum_{m=1}^{M}(\lambda_m-\alpha_m)$.
Since $\lambda_m-\alpha_m\leq (\lambda_m-\alpha_m)^+$ for all $m$, this is only possible if $\lambda_m-\alpha_m=(\lambda_m-\alpha_m)^+$ for all $m$, that is, when $\lambda_m\geq\alpha_m$ for all $m$.
Nevertheless, we explicitly retain this condition in the statement of Theorem~\ref{theorem.main result 1}, as it facilitates the intuition and proof techniques we develop below.

The traditional Schur-Horn theorem is a special case of Theorem~\ref{theorem.main result 1} when $\alpha_m=0$ for all $m$.
Indeed, a nonnegative nonincreasing sequence $\set{\lambda_m}_{m=1}^{M}$ is a $(\mathbf{0},\bfmu)$-completion precisely when it is the spectrum of some frame operator $\sum_{n=1}^{N}\bfphi_n^{}\bfphi_n^*$ where $\norm{\bfphi_n}^2=\mu_n$ for all $n$.
Meanwhile, in this same case, the conditions of~\eqref{equation.completion Schur-Horn} reduce to
\begin{equation*}
\sum_{m=1}^{M}\lambda_m=\sum_{n=1}^{N}\mu_n,
\qquad
\sum_{m=j}^{M}\lambda_m\leq\sum_{n=j}^{N}\mu_n,\quad\forall j=1,\dotsc,M.
\end{equation*}
Subtracting these inequalities from the equality, we see these conditions are a restatement of~\eqref{equation.frame theory Schur-Horn}.

The next section is devoted to the proof of Theorem~\ref{theorem.main result 1}.
The proof of the necessity of \eqref{equation.completion Schur-Horn} follows quickly from the classical principle of eigenvalue interlacing.
On the other hand, the proof of its sufficiency relies on a nontrivial generalization of the eigensteps method of~\cite{CahillFMPS13,FickusMPS13}.

In Section~3, we then use this new characterization of all $(\bfalpha,\bfmu)$-completions to find the optimal such completion;
this problem was first posed in~\cite{MasseyRS13}, a generalization of one given in~\cite{FickusMP11}.
In particular, in contrast to~\cite{FengLY06,MasseyR08} which characterize what $\bfalpha$'s and $\bfmu$'s permit a tight (constant) completion $\set{\lambda_m}_{m=1}^{M}$, we take an arbitrary $\bfalpha$ and $\bfmu$ and compute the tightest $(\bfalpha,\bfmu)$-completion.
Here, one naturally asks how we should quantify tightness.
Should we make the condition number $\lambda_1/\lambda_M$ as small as possible?
If so, how is this related to making $\lambda_M$ and $\lambda_1$ as large and small as possible, respectively?
Alternatively, should we maybe minimize the mean squared reconstruction error $\sum_{n=1}^{N}1/\lambda_n$ of~\cite{GoyalVT98} or the frame potential $\sum_{n=1}^{N}\lambda_n^2$ of~\cite{BenedettoF03}?
Surprisingly, there exists a single completion that does all these things and more.

The key idea, as similarly exploited in~\cite{MasseyRS13,MasseyRS14a,MasseyRS14b,ViswanathA02}, is that majorization itself yields a partial order on the set of all $(\bfalpha,\bfmu)$-completions.
To be precise, note that by the equality condition of Theorem~\ref{theorem.main result 1}, any two such completions $\set{\beta_m}_{m=1}^{M}$ and $\set{\lambda_m}_{m=1}^{M}$ have the same sum, namely \smash{$\sum_{m=1}^{M}\beta_m=\sum_{m=1}^{M}\alpha_m+\sum_{n=1}^{N}\mu_n=\sum_{m=1}^{M}\lambda_m$}.
Thus, $\set{\beta_m}_{m=1}^{N}\preceq\set{\lambda_m}_{m=1}^{M}$ when
\begin{equation}
\label{equation.majorization from bottom}
\sum_{m=1}^{j}\beta_m\leq\sum_{m=1}^{j}\lambda_m,\quad\forall j=1,\dotsc,M.
\end{equation}
Being only a partial order on the set of all $(\bfalpha,\bfmu)$-completions, there is no immediate guarantee that a minimal completion with respect to this order exists.
Nevertheless, we show that one does in fact exist, by constructing it explicitly:
\begin{theorem}
\label{theorem.main result 2}
Let $\bfalpha=\{\alpha_m\}_{m=1}^M$ and $\bfmu=\{\mu_{n}\}_{n=1}^N$ be nonnegative and nonincreasing with $M\leq N$.
For any $k=1,\dotsc,M$, given $\set{\beta_m}_{m=k+1}^{M}$ define
\begin{equation*}
\beta_k:=\max\Biggset{t\in\bbR: \sum_{m=j}^{k}(t-\alpha_{m-j+1})^++\sum_{m=k+1}^{M}(\beta_m-\alpha_{m-j+1})^+\leq\sum_{n=j}^{N}\mu_n,\ \forall j=1,\dotsc,k}.
\end{equation*}
Then $\set{\beta_m}_{m=1}^{M}$ is a well-defined $(\bfalpha,\bfmu)$-completion and moreover is the minimal such completion with respect to majorization: if $\set{\lambda_m}_{m=1}^{M}$ is any $(\bfalpha,\bfmu)$-completion then $\set{\beta_m}_{m=1}^{M}\preceq\set{\lambda_m}_{m=1}^{M}$.
\end{theorem}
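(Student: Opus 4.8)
The plan is to verify, in order, that the recursively defined sequence $\set{\beta_m}_{m=1}^{M}$ is (i) well defined, (ii) nonnegative and nonincreasing, (iii) an $(\bfalpha,\bfmu)$-completion by the criterion of Theorem~\ref{theorem.main result 1}, and (iv) majorized by every $(\bfalpha,\bfmu)$-completion. Items (i)--(iii) ride on a single downward induction on $k$, whose engine is the elementary identity that, for any $j\leq k$,
\[
\sum_{m=j}^{k}(\beta_{k+1}-\alpha_{m-j+1})^+ + \sum_{m=k+1}^{M}(\beta_m-\alpha_{m-j+1})^+
=\sum_{m=j}^{k+1}(\beta_{k+1}-\alpha_{m-j+1})^+ + \sum_{m=k+2}^{M}(\beta_m-\alpha_{m-j+1})^+,
\]
whose right-hand side is the left side of the $j$th constraint from step $k+1$ evaluated at $t=\beta_{k+1}$. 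Thus $t=\beta_{k+1}$ is feasible at step $k$; the same computation (using $(0-\alpha_{m-j+1})^+=0$) shows $t=0$ is feasible at step $k$; and the $j=1$ constraint forces the step-$k$ feasible set to be bounded above, being therefore a nonempty closed half-line $(-\infty,\beta_k]$ with $\beta_k$ finite. This gives (i), and since $\beta_k\geq 0$ and $\beta_k\geq\beta_{k+1}$ when $k<M$, also (ii).

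For (iii) I would check the hypotheses of Theorem~\ref{theorem.main result 1} one by one. The inequality $\sum_{m=j}^{M}(\beta_m-\alpha_{m-j+1})^+\leq\sum_{n=j}^{N}\mu_n$ is, after isolating the $m=j$ summand, precisely the $j$th constraint appearing at step $j$, evaluated at the feasible point $t=\beta_j$. For $\beta_m\geq\alpha_m$ one may either invoke the remark following Theorem~\ref{theorem.main result 1} or observe directly that $t=\alpha_k$ is feasible at step $k$. Finally, at step $1$ there is only the single constraint $(t-\alpha_1)^++\sum_{m=2}^{M}(\beta_m-\alpha_m)^+\leq\sum_{n=1}^{N}\mu_n$; its right-hand side minus the constant part is nonnegative (this is exactly the feasibility of $t=0$ at step $1$), so the maximizer $\beta_1$ makes the constraint tight, and rearranging while using $\beta_m\geq\alpha_m$ to drop the positive parts yields $\sum_{m=1}^{M}(\beta_m-\alpha_m)=\sum_{n=1}^{N}\mu_n$. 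With every hypothesis of Theorem~\ref{theorem.main result 1} verified (and $M\leq N$ guaranteeing no degenerate empty-sum indices), $\set{\beta_m}_{m=1}^{M}$ is an $(\bfalpha,\bfmu)$-completion.

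It remains to prove minimality. Because all $(\bfalpha,\bfmu)$-completions share the same sum, $\set{\beta_m}_{m=1}^{M}\preceq\set{\lambda_m}_{m=1}^{M}$ is equivalent to $\sum_{m=k}^{M}\beta_m\geq\sum_{m=k}^{M}\lambda_m$ for all $k$, which I would establish by a second downward induction on $k$. The base case $k=M$ is $\beta_M\geq\lambda_M$, and holds because $t=\lambda_M$ is feasible at step $M$: since $\lambda$ is nonincreasing, $\sum_{m=j}^{M}(\lambda_M-\alpha_{m-j+1})^+\leq\sum_{m=j}^{M}(\lambda_m-\alpha_{m-j+1})^+$, and the latter is at most $\sum_{n=j}^{N}\mu_n$ by Theorem~\ref{theorem.main result 1}. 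For the inductive step, setting $\delta:=\sum_{m=k+1}^{M}(\beta_m-\lambda_m)\geq 0$, the desired inequality reduces to $\beta_k\geq\lambda_k-\delta$; when $\lambda_k-\delta\leq\alpha_k$ this follows from $\beta_k\geq\alpha_k$, and otherwise I would aim to show that $t=\lambda_k-\delta$ is feasible at step $k$ by estimating each step-$k$ constraint against the corresponding $j$th inequality of Theorem~\ref{theorem.main result 1} for $\lambda$.

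The main obstacle is exactly this feasibility check. The subtlety is that the inductive hypothesis controls only the \emph{aggregate} tail discrepancy $\sum_{m=k+1}^{M}(\beta_m-\lambda_m)$, whereas a step-$k$ constraint depends on the individual tail values $\set{\beta_m}_{m>k}$ nonlinearly, through the positive parts and the shifts $\alpha_{m-j+1}$, so naive term-by-term comparison fails. The extra leverage must come from the rigidity built into the construction: since $\beta_k$ is the \emph{largest} feasible value, some step-$k$ constraint is tight at $t=\beta_k$, and (in the spirit of the eigenstep analysis of~\cite{CahillFMPS13,FickusMPS13}) this pins $\set{\beta_m}$ down to be constant on each block of indices lying between consecutive tight constraints, away from where it is clamped from below by $\bfalpha$; inserting this structural information into the positive-part estimates should close the gap. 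Equivalently, one may argue by contradiction: take the largest $k$ at which the tail inequality fails, note $\beta_k<\lambda_k$, invoke a tight step-$k$ constraint at some index $i\leq k$, and derive a violation of the $i$th inequality of Theorem~\ref{theorem.main result 1} for $\lambda$.
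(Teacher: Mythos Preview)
Your treatment of parts (i)--(iii) is essentially the paper's Lemma~\ref{lemma.first lemma for optimal completions}: the feasibility of $t=\beta_{k+1}$ via the identity you wrote, the closedness and boundedness of each constraint set, the check that $\alpha_k$ is feasible, and the tightness at $k=1$ forcing the trace equality are all exactly how the paper proceeds. This part is fine.

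The gap is in part (iv). Your first suggested route---show $t=\lambda_k-\delta$ is feasible at step $k$---is not what the paper does, and for good reason: as you yourself note, the tail constraints depend on the individual $\beta_m$'s through positive parts with varying shifts $\alpha_{m-j+1}$, so the aggregate defect $\delta$ alone cannot be pushed through. Your second route---pick a tight constraint index $i\leq k$ and play it against the $i$th inequality for $\lambda$---is the right instinct, but it is incomplete as stated, and the structural claim you offer (``$\{\beta_m\}$ is constant on blocks between tight constraints'') is not what is actually needed.

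What the paper proves (its Lemma~\ref{lemma.second lemma for optimal completions}) is sharper and different in character. Letting $j(k)$ denote the \emph{smallest} index at which the step-$k$ constraint is tight, one shows: (b) $\alpha_{k-j(k)+1}\leq\beta_k$ for every $k$, and (c) $j(k)\leq j(k+1)$, i.e.\ the minimal tight index is nondecreasing in $k$. Together these give $\alpha_{m-j(k)+1}\leq\beta_m$ for \emph{every} $m\geq k$, which is precisely what lets you drop the positive parts on the $\beta$-side of the comparison. The monotonicity (c) is proved by a nontrivial argument exploiting that $f_{k;i}-f_{k;j}$ is nondecreasing whenever $i\leq j$; it is not a consequence of any block-constancy of $\{\beta_m\}$. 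Without (b) and (c), your contradiction sketch does not close: choosing an arbitrary tight index $i$ and comparing with the $i$th inequality for $\lambda$ yields only $\sum_{m\geq k}(\lambda_m-\alpha_{m-i+1})^+\leq\sum_{m\geq k}(\beta_m-\alpha_{m-i+1})^+$, and you cannot strip the positive parts on the right without knowing $\alpha_{m-i+1}\leq\beta_m$ for all $m\geq k$. Identifying and proving the monotonicity of $j(k)$ is the missing idea.
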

Here, we have assumed $M\leq N$ since it makes the proof of Theorem~\ref{theorem.main result 2} slightly cleaner; to apply the result in the case where $N<M$, simply define $\mu_n:=0$ for all $n=N+1,\dotsc,M$.

Note that the minimal completion $\set{\beta_m}_{m=1}^{M}$ given by Theorem~\ref{theorem.main result 2} is obviously unique.
Indeed, if both $\set{\beta_m}_{m=1}^{N}$ and $\set{\lambda_m}_{m=1}^{M}$ are minimal completions then $\set{\beta_m}_{m=1}^{N}\preceq\set{\lambda_m}_{m=1}^{M}$ and $\set{\lambda_m}_{m=1}^{N}\preceq\set{\beta_m}_{m=1}^{M}$.
Thus, \smash{$\sum_{m=1}^{j}\beta_m=\sum_{m=1}^{j}\lambda_m$} for all $j=1,\dotsc,M$, implying $\beta_m=\lambda_m$ for all $m$.
To see why this minimal completion is optimally tight, note that letting $j=1$ in~\eqref{equation.majorization from bottom} gives $\beta_1\leq\lambda_1$ for all $(\bfalpha,\bfmu)$-completions $\set{\lambda_m}_{m=1}^{M}$,
meaning that of all possible such completions, the maximum value of $\set{\beta_m}_{m=1}^{M}$ is as small as possible.
At the same time, the minimum value of $\set{\beta_m}_{m=1}^{M}$ is as large as possible: subtracting the inequalities in~\eqref{equation.majorization from bottom} from the equality $\sum_{m=1}^{M}\beta_m=\sum_{m=1}^{M}\lambda_m$ gives the equivalent inequalities:
\begin{equation*}
\sum_{m=j}^{M}\lambda_m\leq\sum_{m=j}^{M}\beta_m,\quad\forall j=1,\dotsc,M.
\end{equation*}
In the special case where $j=M$, we see that any $(\bfalpha,\bfmu)$-completion $\set{\lambda_m}_{m=1}^{M}$ necessarily satisfies $\lambda_M\leq\beta_M$, as claimed.
Together, these facts imply that $\beta_1/\beta_M\leq \lambda_1/\lambda_M$ for any such $\set{\lambda_m}_{m=1}^{M}$, meaning $\set{\beta_m}_{m=1}^{M}$ has the smallest condition number of any $(\bfalpha,\bfmu)$-completion.
Moreover, $\set{\beta_m}_{m=1}^{M}$ is optimal in an even stronger sense.
To be clear, using some of the techniques of this paper, one can show that there sometimes exists other $(\bfalpha,\bfmu)$-completions $\set{\lambda_m}_{m=1}^{M}$ that have the same condition number as $\set{\beta_m}_{m=1}^{M}$, having $\lambda_1=\beta_1$ and $\lambda_M=\beta_M$ but not $\lambda_m=\beta_m$ for all $m=2,\dotsc,M-1$.
Nevertheless, $\set{\beta_m}$ is a better completion than these: being a minimum with respect to majorization~\eqref{equation.majorization from bottom},
the classical theory of \textit{Schur-convexity} tells us that $\sum_{m=1}^{M}f(\beta_m)\leq\sum_{m=1}^{M}f(\lambda_m)$ for any convex function $f$.
In particular, letting $f(x)=x^2$ we see that $\set{\beta_m}_{m=1}^{M}$ has minimal frame potential.
Moreover, if $\beta_M>0$ then letting $f(x)=1/x$ gives that $\set{\beta_m}_{m=1}^{M}$ has minimal mean squared reconstruction error.

Before moving on to the proofs of Theorems~\ref{theorem.main result 1} and~\ref{theorem.main result 2}, we take a moment to put Theorem~\ref{theorem.main result 2} in the context of the literature, specifically the recent work of~\cite{MasseyRS14b}.
To be clear, the problem addressed by Theorem~\ref{theorem.main result 2}---to provide an algorithm for computing the $(\bfalpha,\bfmu)$-completion of a given frame which is optimal with respect to majorization---was first posed in~\cite{MasseyRS13}.
This same paper contained a partial solution to this problem.
An even better partial solution was given in a follow-up paper by these same authors~\cite{MasseyRS14a}.
Shortly thereafter, they wrote a second follow-up paper~\cite{MasseyRS14b} that provides a complete solution to this problem;
it is against this most recent work that we compare our own.

The algorithm given in~\cite{MasseyRS14b} for computing the optimal $(\bfalpha,\bfmu)$-completion is completely different from Theorem~\ref{theorem.main result 2}.
Moreover, it is proven in a completely different way.
This is not surprising: \cite{MasseyRS14b} derives its algorithm directly without having access to the succinct and powerful characterization of $(\bfalpha,\bfmu)$-completions given in Theorem~\ref{theorem.main result 1}.
To be precise, Theorem~3.7 of~\cite{MasseyRS14b} shows their algorithm---given in Proposition~3.6 of that same paper---indeed computes optimal $(\bfalpha,\bfmu)$-completions.
To understand their algorithm in detail, the interested reader must also consider Remark~2.13, Remark~3.2 and Theorem~3.4.
By comparison, the algorithm of Theorem~\ref{theorem.main result 2} is much shorter as a statement, and is self-contained.
This is one advantage of Theorem~\ref{theorem.main result 2}.
A second advantage is the nature of its proof: though the proofs of both Theorem~\ref{theorem.main result 2} and Theorem~3.7 of~\cite{MasseyRS14b} are very technical, the former has a nice geometric motivation.
Indeed, as discussed in Section~3, we construct an optimal completion by \textit{water filling}---a well-known spectral technique from the theory of communications---subject to the constraints of Theorem~\ref{theorem.main result 1}.
In other respects, neither algorithm has a clear advantage.
Both algorithms are computing the same spectrum since, as noted above, the optimal completion is unique.
Moreover, it is hard to determine exactly which algorithm is more computationally efficient: at the end of this paper, we give an explicit example which illustrates exactly how we implement Theorem~\ref{theorem.main result 2}, and then discuss how we implement it in general; no example is given in~\cite{MasseyRS14b}, and we were not able to find or determine a decent operation count for that algorithm.
Nevertheless, both algorithms seem very fast, and can be performed by hand in spaces of sufficiently low dimension.
And, moving forward, we believe that both our proof techniques as well as those of~\cite{MasseyRS14b} will be useful in future research.

\section{Characterizing all completions: Proving Theorem~\ref{theorem.main result 1}}

In this section we characterize the spectra of all possible completions of a positive semidefinite matrix $\bfA\in\bbF^{M\times M}$ with vectors $\set{\bfphi_n}_{n=1}^{N}$ of given lengths $\bfmu=\set{\mu_n}_{n=1}^{N}$.
To be precise, let $\bfalpha=\set{\alpha_m}_{m=1}^{M}$ denote the nonnegative spectrum of $\bfA$ and assume without loss of generality that both $\set{\alpha_m}_{m=1}^{M}$ and $\set{\mu_n}_{n=1}^{N}$ are arranged in nonincreasing order.
We characterize all possible $(\bfalpha,\bfmu)$-completions, that is, the spectra $\set{\lambda_m}_{m=1}^{M}$ of all operators of the form \smash{$\bfA+\sum_{n=1}^{N}\bfphi_n^{}\bfphi_n^*$} where $\set{\bfphi_n}_{n=1}^{N}$ are vectors in $\bbF^M$ that satisfy $\norm{\bfphi_n}^2=\mu_n$ for all $n$.
Here, note that by conjugating by a unitary matrix whose columns are eigenvectors of $\bfA$ we may assume without loss of generality that $\bfA$ is diagonal.
In particular, our characterization of $\set{\lambda_m}_{m=1}^{M}$ will not depend on $\bfA$ per se, but rather, on its spectrum $\bfalpha$.

To obtain some necessary conditions, fix any $\set{\bfphi_n}_{n=1}^{N}$ in $\bbF^M$ with $\norm{\bfphi_n}^2=\mu_n$ for all $n$,
and let $\set{\lambda_m}_{m=1}^{M}$ be the nonnegative nonincreasing spectrum of \smash{$\bfA+\sum_{n=1}^{N}\bfphi_n^{}\bfphi_n^*$}.
The key idea is that for any given $P=0,\dotsc,N$ we also consider the nonnegative nonincreasing spectrum $\set{\lambda_{P;m}}_{m=1}^{M}$ of the \textit{$P$th partial completion} $\bfA+\sum_{n=1}^{P}\bfphi_n^{}\bfphi_n^*$.
Letting $P=0$ and $P=N$ gives $\lambda_{0;m}=\alpha_m$ and $\lambda_{N;m}=\lambda_n$ for all $m$, respectively.
Moreover, the trace of the $P$th partial completion is necessarily
\begin{equation*}
\sum_{m=1}^M\lambda_{P;m}
=\Tr\Biggparen{\bfA+\sum_{n=1}^{P}\bfphi_n^{}\bfphi_n^*}
=\Tr(\bfA)+\sum_{n=1}^{P}\Tr(\bfphi_n^*\bfphi_n^{})
=\sum_{m=1}^{M}\alpha_m+\sum_{n=1}^{P}\norm{\bfphi_n}^2
=\sum_{m=1}^{M}\alpha_m+\sum_{n=1}^{P}\mu_n.
\end{equation*}
Finally, for any $P=1,\dotsc,N$, the $P$th partial completion is obtained by adding the rank-one self-adjoint operator $\bfphi_P^{}\bfphi_P^*$ to the $(P-1)$th partial completion and so a well-known classical result from matrix analysis implies that $\set{\lambda_{P;m}}_{m=1}^{M}$ necessarily \textit{interlaces over} \smash{$\set{\lambda_{P-1;m}}_{m=1}^{M}$} in the sense that $\lambda_{P;m+1}\leq\lambda_{P-1;m}\leq\lambda_{P;m}$ for all $m=1,\dotsc,M$, under the convention that $\lambda_{P;M+1}:=0$.
To elaborate on this last condition, note that for any $P=0,\dotsc,N$ we have $\bfA+\sum_{n=1}^{P}\bfphi_n^{}\bfphi_n^*=\bfX_P^{}\bfX_P^*$ where $X_P$ is the $M\times(M+P)$ matrix obtained by concatenating the $M\times M$ matrix \smash{$\bfA^{\frac12}$} with the $P$ column vectors $\set{\bfphi_n}_{n=1}^{P}$.
Since $M+P\geq M$, the spectrum of the corresponding Gram matrix $\bfX_P^*\bfX_P^{}$ is a zero-padded version of the spectrum of $\bfX_P^{}\bfX_P^*$.
That is, $\bfX_P^*\bfX_P^{}$ has spectrum $\set{\lambda_{P;m}}_{m=1}^{M+P}$ provided we define $\lambda_{P;m}:=0$ when $m>M$.
Moreover, for any $P=1,\dotsc,N$ the Gram matrix $\bfX_{P-1}^*\bfX_{P-1}^{}$ is the first principal $(P-1)\times(P-1)$ submatrix of $\bfX_P^*\bfX_P^{}$.
At this point, the famous Cauchy interlacing theorem implies the eigenvalues of $\bfX_{P-1}^*\bfX_{P-1}^{}$ interlace in those of $\bfX_P^*\bfX_P^{}$,
namely that $\lambda_{P;m+1}\leq\lambda_{P-1;m}\leq\lambda_{P;m}$ for all $m=1,\dotsc,M+P-1$.
This is precisely the interlacing condition we gave above, provided we realize it is superfluous for all $m>M$, requiring $0\leq 0\leq0$.

Together, any sequence of spectra $\set{\lambda_{P;m}}_{m=1}^{M}$ that satisfies these conditions is known as a sequence of \textit{eigensteps}:
\begin{definition}
\label{definition.eigensteps}
For any nonnegative nonincreasing sequences $\bfalpha=\set{\alpha_m}_{m=1}^{M}$, $\bflambda=\set{\lambda_m}_{m=1}^{M}$ and $\bfmu=\set{\mu_n}_{n=1}^{N}$,
a sequence of nonincreasing sequences $\set{\set{\lambda_{P;m}}_{m=1}^{M}}_{P=0}^{N}$ is a \textit{sequence of eigensteps from $\bfalpha$ to $\bflambda$ with lengths $\bfmu$} if
\begin{enumerate}
\renewcommand{\labelenumi}{(\roman{enumi})}
\item
$\lambda_{0;m}=\alpha_m$ for all $m=1,\dotsc,M$,
\item
$\lambda_{N;m}=\lambda_m$ for all $m=1,\dotsc,M$,
\item
$\sum_{m=1}^M\lambda_{P;m}=\sum_{m=1}^{M}\alpha_m+\sum_{n=1}^{P}\mu_n$ for all $P=0,\dotsc,N$,
\item
$\lambda_{P;m+1}\leq\lambda_{P-1;m}\leq\lambda_{P;m}$ for all $m=1,\dotsc,M$, $P=1,\dotsc,N$; here $\lambda_{P;M+1}:=0$.
\end{enumerate}
\end{definition}
In the special case where $\alpha_m=0$ for all $m$, the above definition reduces to the definition of eigensteps that was introduced in~\cite{CahillFMPS13}.
Having that any $(\bfalpha,\bfmu)$-completion $\bflambda$ yields eigensteps, we can quickly prove the ``only if" direction of Theorem~\ref{theorem.main result 1},
namely that $\lambda_m\geq\alpha_m$ for all $m$ and that~\eqref{equation.completion Schur-Horn} holds:

\begin{proof}[Proof of the ($\Rightarrow$) direction of Theorem~\ref{theorem.main result 1}]
Let $\set{\lambda_m}_{m=1}^{M}$ be any $(\bfalpha,\bfmu)$-completion, meaning there exists a positive semidefinite matrix $\bfA\in\bbF^{M\times M}$ whose spectrum is $\set{\alpha_m}_{m=1}^{M}$ as well as a sequence of vectors $\set{\bfphi_n}_{n=1}^{N}$ in $\bbF^M$ with $\norm{\bfphi_n}^2=\mu_n$ for all $n=1,\dotsc,N$ such that $\set{\lambda_m}_{m=1}^{M}$ is the spectrum of \smash{$\bfA+\sum_{n=1}^{N}\bfphi_n^{}\bfphi_n^*$}.
As noted above, for any $P=0,\dotsc,N$ letting $\set{\lambda_{P;m}}_{m=1}^{M}$ denote the nonnegative nonincreasing spectrum of $\bfA+\sum_{n=1}^{P}\bfphi_n^{}\bfphi_n^*$ yields a sequence of eigensteps, cf.\ Definition~\ref{definition.eigensteps}.
In particular, eigenstep conditions (i) and (ii) as well as (possibly repeated) use of (iv) gives $\alpha_m=\lambda_{0;m}\leq\lambda_{N;m}=\lambda_m$ for all $m=1,\dotsc,M$, as claimed.
Next, the equality condition of~\eqref{equation.completion Schur-Horn} follows immediately from letting $P=N$ in (iii):
\begin{equation*}
\sum_{m=1}^{M}\lambda_m
=\sum_{m=1}^M\lambda_{N;m}
=\sum_{m=1}^{M}\alpha_m+\sum_{n=1}^{N}\mu_n.
\end{equation*}
To prove the inequality conditions in~\eqref{equation.completion Schur-Horn}, note that for any $j=1,\dotsc,N$, subtracting the $P=j-1$ instance of (iii) from the $P=N$ instance of (iii) gives
\begin{equation*}
\sum_{m=1}^M(\lambda_m-\lambda_{j-1;m})
=\sum_{m=1}^M\lambda_{N;m}-\sum_{m=1}^M\lambda_{j-1;m}
=\Biggparen{\sum_{m=1}^{M}\alpha_m+\sum_{n=1}^{N}\mu_n}-\Biggparen{\sum_{m=1}^{M}\alpha_m+\sum_{n=1}^{j-1}\mu_n}
=\sum_{n=j}^{N}\mu_n,
\quad 1\leq j\leq N.
\end{equation*}
Continuing, note that the upper bounds in (iv) give $\lambda_{j-1;m}\leq\lambda_{N;m}=\lambda_m$ for all $j=1,\dotsc,N$ and $m=1,\dotsc,M$, and so
\begin{equation*}
\sum_{m=j}^M(\lambda_m-\lambda_{j-1;m})
\leq\sum_{m=1}^M(\lambda_m-\lambda_{j-1;m})
=\sum_{n=j}^{N}\mu_n,
\quad 1\leq j\leq\min\set{M,N}.
\end{equation*}
Meanwhile, the lower bound in (iv) gives $\lambda_{j-1;m}\leq\lambda_{j-2;m-1}\leq\dotsb\leq\lambda_{0;m-(j-1)}=\alpha_{m-j+1}$ for all $m=j,\dotsc,M$.
To summarize, for any $m=j,\dotsc,M$ we have both $0\leq\lambda_m-\lambda_{j-1;m}$ and $\lambda_m-\alpha_{m-j+1}\leq\lambda_m-\lambda_{j-1;m}$, implying
\begin{equation}
\label{equation.proof of necessity 1}
\sum_{m=j}^{M}(\lambda_m-\alpha_{m-j+1})^+
=\sum_{m=j}^{M}\max\set{0,\lambda_m-\alpha_{m-j+1}}
\leq\sum_{m=j}^M(\lambda_m-\lambda_{j-1;m})
\leq\sum_{n=j}^{N}\mu_n,
\quad 1\leq j\leq\min\set{M,N}.
\end{equation}
In the case where $M\leq N$,~\eqref{equation.proof of necessity 1} yields all the claimed inequality conditions of~\eqref{equation.completion Schur-Horn}.
In the case where $N<M$,~\eqref{equation.proof of necessity 1} still implies the inequalities in~\eqref{equation.completion Schur-Horn} hold for all $j=1,\dotsc,N$.
What remains is the case where $N<j\leq M$; for such $j$, the right-hand side of the inequality in~\eqref{equation.completion Schur-Horn} is defined to be zero, being an empty sum.
As such, the corresponding inequality can only hold provided $(\lambda_m-\alpha_{m-j+1})^+=0$ for all $m=j,\dotsc,M$.
This follows from repeatedly applying the lower bound in (iv): since $N\leq j-1\leq m-1$ we have
$\lambda_m=\lambda_{N;m}\leq\lambda_{N-1;m-1}\leq\dotsb\leq\lambda_{0;m-N}=\alpha_{m-N}\leq\alpha_{m-j+1}$.
\end{proof}

Our proof of the ``if" direction of Theorem~\ref{theorem.main result 1} is substantially more involved, and requires several technical lemmas.
The first lemma is a strengthening of one of the main results of~\cite{CahillFMPS13}:
\begin{lemma}
\label{lemma.eigensteps}
For any nonnegative nonincreasing sequences $\bfalpha=\set{\alpha_m}_{m=1}^{M}$, $\bflambda=\set{\lambda_m}_{m=1}^{M}$ and $\bfmu=\set{\mu_n}_{n=1}^{N}$,
$\bflambda$ is an $(\bfalpha,\bfmu)$-completion (Definition~\ref{definition.completion}) if and only if there exists a sequence of eigensteps from $\bfalpha$ to $\bflambda$ with lengths $\bfmu$ (Definition~\ref{definition.eigensteps}).
\end{lemma}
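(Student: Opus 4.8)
The plan is as follows. The forward implication ($\Rightarrow$) was in effect already carried out in the discussion preceding Definition~\ref{definition.eigensteps}: given a positive semidefinite $\bfA$ of spectrum $\bfalpha$ and vectors $\set{\bfphi_n}_{n=1}^N$ with $\norm{\bfphi_n}^2=\mu_n$ for all $n$, the nonincreasing spectra $\set{\lambda_{P;m}}_{m=1}^M$ of the partial completions $\bfA+\sum_{n=1}^P\bfphi_n^{}\bfphi_n^*$ satisfy conditions (i) and (ii) by inspection, condition (iii) by the trace identity, and condition (iv) by Cauchy interlacing applied to the Gram matrices $\bfX_P^*\bfX_P^{}$. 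So all of the work lies in the converse. For that, I would build a completion one rank-one summand at a time, the engine being the following one-step lemma: if $\bfB\in\bbF^{M\times M}$ is positive semidefinite with nonincreasing spectrum $\set{a_m}_{m=1}^M$, and $\set{b_m}_{m=1}^M$ is nonincreasing with $b_{m+1}\leq a_m\leq b_m$ for all $m$ (convention $b_{M+1}:=0$), then there exists $\bfphi\in\bbF^M$ with $\norm{\bfphi}^2=\sum_{m=1}^Mb_m-\sum_{m=1}^Ma_m$ such that $\bfB+\bfphi\bfphi^*$ has spectrum $\set{b_m}_{m=1}^M$.

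Granting this lemma, the proof of ($\Leftarrow$) is short. Start from any positive semidefinite $\bfA_0$ with spectrum $\bfalpha=\set{\lambda_{0;m}}$, say $\mathrm{diag}(\alpha_1,\dotsc,\alpha_M)$. For $P=1,\dotsc,N$, assume inductively that $\bfA_{P-1}$ has spectrum $\set{\lambda_{P-1;m}}$; eigenstep condition (iv) makes the one-step lemma applicable with $a_m=\lambda_{P-1;m}$ and $b_m=\lambda_{P;m}$, and it produces a vector $\bfphi_P$ with $\norm{\bfphi_P}^2=\sum_m\lambda_{P;m}-\sum_m\lambda_{P-1;m}=\mu_P$ (the last equality by subtracting consecutive instances of (iii)) for which $\bfA_P:=\bfA_{P-1}+\bfphi_P^{}\bfphi_P^*$ has spectrum $\set{\lambda_{P;m}}$. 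After $N$ steps, $\bfA_0+\sum_{n=1}^N\bfphi_n^{}\bfphi_n^*=\bfA_N$ has spectrum $\set{\lambda_{N;m}}=\bflambda$ by (ii), so $\bflambda$ is an $(\bfalpha,\bfmu)$-completion. This is precisely the sense in which the lemma strengthens~\cite{CahillFMPS13}: the only new feature beyond the $\bfalpha=\mathbf{0}$ case is that the induction is seeded with a nonzero positive semidefinite matrix, which is then carried through unchanged.

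The remaining task, and the only genuinely delicate point, is the one-step lemma. After conjugating by an eigenbasis we may take $\bfB=\mathrm{diag}(a_1,\dotsc,a_M)$; writing $\bfphi=(\phi_1,\dotsc,\phi_M)$, the matrix-determinant lemma gives
\begin{equation*}
\det(t\bfI-\bfB-\bfphi\bfphi^*)=\prod_{m=1}^M(t-a_m)-\sum_{m=1}^M\abs{\phi_m}^2\prod_{m'\neq m}(t-a_{m'}),
\end{equation*}
and we want the right-hand side to equal $\prod_{m=1}^M(t-b_m)$. When the $a_m$ are distinct this is forced: both sides are monic of degree $M$, so it suffices that they agree at the $M$ points $t=a_k$, which pins down $\abs{\phi_k}^2=-\prod_{m=1}^{M}(a_k-b_m)\big/\prod_{m'\neq k}(a_k-a_{m'})$; the interlacing hypotheses make each such value nonnegative, and comparing coefficients of $t^{M-1}$ confirms $\sum_k\abs{\phi_k}^2=\sum_mb_m-\sum_ma_m$, as required. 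The main obstacle is repeated $a_m$: if $a_k=a_{k+1}$, then the interlacing inequalities force some $b_j$ to equal that common value, so $\bfphi$ may be chosen to vanish on a one-dimensional slice of the associated eigenspace while leaving the rest of that eigenspace untouched; peeling off such slices reduces matters to the distinct-spectrum case. Doing this bookkeeping carefully---or simply quoting the corresponding one-step result already in~\cite{CahillFMPS13}---finishes the proof. A tidy alternative that avoids the induction altogether is to reduce ($\Leftarrow$) directly to the $\bfalpha=\mathbf{0}$ theorem of~\cite{CahillFMPS13}: prepending to the given eigensteps the ``diagonal'' spectra $\set{\alpha_1,\dotsc,\alpha_Q,0,\dotsc,0}$ for $Q=0,\dotsc,M-1$ yields a sequence of eigensteps from $\mathbf{0}$ to $\bflambda$ with lengths $(\alpha_1,\dotsc,\alpha_M,\mu_1,\dotsc,\mu_N)$, and the frame that~\cite{CahillFMPS13} builds from them realizes these eigensteps at every partial stage, so its $M$th partial frame operator $\bfA$ has spectrum $\bfalpha$ while $\bfA+\sum_{n=1}^N\bfphi_n^{}\bfphi_n^*$ has spectrum $\bflambda$.
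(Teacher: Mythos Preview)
Your argument is correct. The forward direction is indeed exactly what the paper records, and your primary route for the converse---iterating a one-step rank-one update lemma starting from any positive semidefinite $\bfA_0$ with spectrum $\bfalpha$---is valid; the one-step lemma is precisely the engine inside \cite{CahillFMPS13}, and your sketch via the matrix-determinant formula (with the standard reduction in the repeated-eigenvalue case) is the usual proof.

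The paper's proof takes your ``tidy alternative'' instead, but with one extra wrinkle you omit. It prepends the diagonal eigensteps just as you do and then cites Theorem~2 of \cite{CahillFMPS13} as a black box; however, that theorem is stated for nonincreasing length sequences, and $(\alpha_1,\dotsc,\alpha_M,\mu_1,\dotsc,\mu_N)$ need not be nonincreasing at the junction (nothing prevents $\mu_1>\alpha_M$). The paper handles this by shifting: choose $\beta\geq\max\{0,\mu_1-\alpha_M\}$, add $\beta$ to every eigenstep, use lengths $(\alpha_1+\beta,\dotsc,\alpha_M+\beta,\mu_1,\dotsc,\mu_N)$, apply \cite{CahillFMPS13} to get $\set{\bfpsi_n}_{n=1}^{M+N}$, and then set $\bfA=\bfPsi_M^{}\bfPsi_M^*-\beta\bfI$ and $\bfphi_n=\bfpsi_{M+n}$. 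So your alternative is essentially the paper's argument minus this shift; as written it has a small formal gap unless you either add the shift or note that the construction in \cite{CahillFMPS13} does not actually use the nonincreasing hypothesis.

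What each route buys: your inductive approach is more self-contained and, usefully, completes a \emph{given} $\bfA$ rather than manufacturing one---exactly what one wants in practice (the paper itself remarks on this at the end of Section~2). The paper's reduction is shorter once the shift is in place, since it lets \cite{CahillFMPS13} do all the work.
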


\begin{proof}
The reasons why eigensteps necessarily exist for any $(\bfalpha,\bfmu)$-completion were discussed above: $\set{\lambda_{P;m}}_{m=1}^{M}$ is defined to be the nonincreasing spectrum of \smash{$\bfA+\sum_{n=1}^{P}\bfphi_n^{}\bfphi_n^*$}.
Conversely, suppose there exists a sequence of eigensteps $\set{\set{\lambda_{P;m}}_{m=1}^{M}}_{P=0}^{N}$ from $\bfalpha$ to $\bflambda$ with lengths $\bfmu$.
To construct $\bfA$ and $\set{\bfphi_n}_{n=1}^{N}$ we exploit Theorem~2 of~\cite{CahillFMPS13} which constructs frame vectors from eigensteps whose initial spectrum is identically zero.

In particular, taking any fixed $\beta\geq\max\set{0,\mu_1-\alpha_M}$, we claim defining $\set{\set{\kappa_{P;m}}_{m=1}^{M}}_{P=0}^{M+N}$ and $\set{\nu_n}_{n=1}^{M+N}$ by
\begin{equation}
\label{equation.proof of eigensteps 1}
\kappa_{P;m}:=\left\{\begin{array}{ll}0, &P<m,\\\alpha_m+\beta,& m\leq P\leq M,\\\lambda_{P-M;m}+\beta,& M<P,\end{array}\right.
\qquad
\nu_n=\left\{\begin{array}{ll}\alpha_n+\beta,& n\leq M,\\\mu_{n-M},& M< n,\end{array}\right.
\end{equation}
yields a sequence of eigensteps from $\set{0}_{m=1}^{M}$ to $\set{\lambda_m+\beta}_{m=1}^{M}$ with lengths $\set{\nu_n}_{n=1}^{M+N}$;
here the choice of $\beta$ ensures that $\set{\nu_n}_{n=1}^{M+N}$ is nonnegative and nonincreasing.
Indeed, $\kappa_{0;m}=0$ and $\kappa_{M+N;m}=\lambda_{N;m}+\beta=\lambda_m+\beta$ for all $m=1,\dotsc,M$ and so these sequences satisfy conditions (i) and (ii) of Definition~\ref{definition.eigensteps}.
Next, in this setting condition (iii) becomes $\sum_{m=1}^{M}\kappa_{P;m}=\sum_{n=1}^{P}\nu_n$ for all $P=0,\dotsc,M+N$.
For $P\leq M$ this holds since $\sum_{m=1}^{M}\kappa_{P;m}=\sum_{m=1}^{P}(\alpha_m+\beta)=\sum_{n=1}^{P}\nu_n$.
For $P>M$, recall our assumption that $\set{\set{\lambda_{P;m}}_{m=1}^{M}}_{P=0}^{N}$ is a sequence of eigensteps from $\bfalpha$ to $\bflambda$ with lengths $\bfmu$; condition (iii) of this assumption gives
\begin{equation*}
\sum_{m=1}^{M}\kappa_{P;m}
=\sum_{m=1}^{M}(\lambda_{P-M;m}+\beta)
=\Biggparen{\sum_{m=1}^{M}\alpha_m+\sum_{n=1}^{P-M}\mu_n}+M\beta
=\sum_{m=1}^{M}(\alpha_m+\beta)+\sum_{n=M+1}^{P}\mu_{n-M}
=\sum_{n=1}^{P}\nu_n.
\end{equation*}
Finally, we prove (iv), namely that $\kappa_{P;m+1}\leq\kappa_{P-1;m}\leq\kappa_{P;m}$ for all $P=1,\dotsc,M+N$ and $m=1,\dotsc,M$.
For $P\leq M$, this inequality holds for different reasons depending on the relationship between $m$ and $P$:
for $m\leq P-1$ it becomes $\alpha_{m+1}+\beta\leq\alpha_m+\beta\leq\alpha_m+\beta$, which follows from the fact that $\set{\alpha_m}_{m=1}^{M}$ is nonnegative and nonincreasing;
for $m=P$ it becomes $0\leq 0\leq\alpha_m+\beta$ which holds since $\beta\geq 0$;
for $m>P$ it becomes $0\leq 0\leq 0$.
Meanwhile, (iv) also holds in the case where $P>M$ since we are simply adding $\beta$ to our assumed version of (iv):
$\kappa_{P-1;m}=\lambda_{P-1-M;m}+\beta\leq\lambda_{P-M;m}+\beta=\kappa_{P;m}$ for all $m=1,\dotsc,M$
and $\kappa_{P;m+1}=\lambda_{P-M;m+1}+\beta\leq\lambda_{P-1-M;m}+\beta=\kappa_{P-1;m}$ for all $m=1,\dotsc,M-1$.

Having that~\eqref{equation.proof of eigensteps 1} defines a sequence of eigensteps from $\set{0}_{m=1}^{M}$ to $\set{\lambda_m+\beta}_{m=1}^{M}$ with lengths $\set{\nu_n}_{n=1}^{M+N}$, Theorem~2 of~\cite{CahillFMPS13} gives the existence of a sequence of vectors $\set{\bfpsi_n}_{n=1}^{M+N}$ with $\norm{\bfpsi_n}^2=\nu_n$ for all $n$ which also has the property that $\set{\kappa_{P;m}}_{m=1}^{M}$ is the spectrum of the $P$th partial frame operator \smash{$\bfPsi_P^{}\bfPsi_P^*=\sum_{n=1}^{P}\bfpsi_n^{}\bfpsi_n^*$} for any given $P=1,\dotsc,M+N$.
Let $\bfA=\bfPsi_M^{}\bfPsi_M^*-\beta\bfI$ which has spectrum $\set{\kappa_{M;m}-\beta}_{m=1}^{M}=\set{\alpha_m}_{m=1}^{M}$.
Let $\bfphi_n:=\bfpsi_{M+n}$ for all $n=1,\dotsc,N$, meaning $\norm{\bfphi_n}^2=\norm{\bfpsi_{M+n}}^2=\nu_{M+n}=\mu_n$ for all such $n$.
Moreover, the operator
\begin{equation*}
\bfA+\sum_{n=1}^{N}\bfphi_n^{}\bfphi_n^*
=(\bfPsi_M^{}\bfPsi_M^*-\beta\bfI)+\sum_{n=1}^{N}\bfpsi_{M+n}^{}\bfpsi_{M+n}^*
=\sum_{n=1}^M\bfpsi_n^{}\bfpsi_n^*-\beta\bfI+\sum_{n=M+1}^{M+N}\bfpsi_n^{}\bfpsi_n^*
=\bfPsi_{M+N}^{}\bfPsi_{M+N}^*-\beta\bfI
\end{equation*}
has spectrum $\set{\kappa_{M+N;m}-\beta}_{m=1}^{M}=\set{\lambda_m}_{m=1}^{M}$, meaning $\set{\lambda_m}_{m=1}^{M}$ is an $(\bfalpha,\bfmu)$-completion.
\end{proof}

To summarize, if we want to show a given spectrum $\set{\lambda_m}_{m=1}^{M}$ is an $(\bfalpha,\bfmu)$-completion it suffices to construct a corresponding sequence of eigensteps.
In the remainder of this section, we discuss how condition~\eqref{equation.completion Schur-Horn} of Theorem~\ref{theorem.main result 1} lends itself to an iterative construction of such eigensteps.
Here, the main idea is a nontrivial generalization of the \textit{Top Kill} algorithm of~\cite{FickusMPS13}.

Following~\cite{FickusMPS13}, we visualize a nonnegative nonincreasing spectra $\set{\lambda_m}_{m=1}^{M}$ as a pyramid:
each eigenvalue $\lambda_m$ is represented as a horizontal stone block of length $\lambda_m$ and height $1$ that provides a foundation for the block of length $\lambda_{m+1}$ that lies on top of it.
In order to take one eigenstep backwards, we want a nonnegative nonincreasing spectrum $\set{\kappa_m}_{m=1}^{M}$ such that $\sum_{m=1}^{M}\kappa_m=\sum_{m=1}^{M}\lambda_m-\mu_N$ and such that $\set{\lambda_m}_{m=1}^{M}$ interlaces over $\set{\kappa_m}_{m=1}^{M}$.
In terms of pyramids, the trace condition means we form $\set{\kappa_m}_{m=1}^{M}$ by chipping away $\mu_N$ units of stone from $\set{\lambda_m}_{m=1}^{M}$.
Moreover, the interlacing condition means we can only remove the portion of a $\lambda_m$ block that is not covered by the corresponding $\lambda_{m+1}$ block.

Moving beyond the intuition of~\cite{FickusMPS13} so as to address the completion problem, we now further envision that these pyramids encase a pyramidal foundation corresponding to the initial spectrum $\set{\alpha_m}_{m=1}^{M}$.
Our goal is to reveal this foundation via an $N$-stage excavation of $\set{\lambda_m}_{m=1}^{M}$; each stage converts eigensteps $\set{\lambda_{P,m}}_{m=1}^{M}$ into $\set{\lambda_{P-1,m}}_{m=1}^{M}$ for some $P=1,\dotsc,N$.
It turns out that accomplishing this goal requires careful planning.
Indeed, one might be tempted to first completely excavate the highest level of the foundation, then proceed onto the second-highest level, etc.; it turns out that this approach sometimes fails to reveal the entire foundation in $N$ stages, even when the conditions of Theorem~\ref{theorem.main result 1} are satisfied~\cite{Poteet12}.
A better method---one we can prove always works---is to always prioritize the removal of stone that buries the foundation most deeply.
In particular, in the next lemma, for any $m=1,\dotsc,M$ and $p=1,\dotsc,M+1$, we consider the $p$th ``chopped spectrum" obtained by removing the portion of $\lambda_m$ that is not covered by $\lambda_{m+1}$ and which lies at least $p$ layers above its foundation $\set{\alpha_m}_{m=1}^{M}$.
To take one eigenstep backwards from $\set{\lambda_m}_{m=1}^{M}$, we then choose a spectrum $\set{\kappa_m}_{m=1}^{M}$ that lies between two consecutive ``chops" and has the requisite trace.

\begin{lemma}
\label{lemma.chopped spectra}
Let $M$ and $N$ be positive integers and let $\set{\alpha_m}_{m=1}^{M}$, $\set{\lambda_m}_{m=1}^{M}$ and $\set{\mu_n}_{n=1}^{N}$ be any nonnegative  nonincreasing sequences with $\alpha_m\leq\lambda_m$ for all $m$ that also satisfy~\eqref{equation.completion Schur-Horn}.
For any $p=1,\dotsc,M+1$, define the $p$th chopped spectrum $\set{\eta_{p;m}}_{m=1}^{M}$ of $\set{\lambda_m}_{m=1}^{M}$ with respect to $\set{\alpha_m}_{m=1}^{M}$ as
\begin{equation}
\label{equation.proof of Chop Kill step 1}
\eta_{p;m}:=\max\set{\lambda_{m+1},\min\set{\lambda_m,\alpha_{m-p+1}}},\quad\forall m=1,\dotsc,M,
\end{equation}
under the conventions that $\lambda_{M+1}:=0$ and $\alpha_m:=\infty$ for all $m\leq0$.
For any $m=1,\dotsc,M$, the sequence \smash{$\set{\eta_{p:m}}_{p=1}^{M+1}$} is nondecreasing with $\eta_{1;m}=\max\set{\lambda_{m+1},\alpha_m}$ and $\eta_{M+1;m}=\lambda_m$.
Moreover, there exists an index $p=1,\dotsc,M$ and a sequence $\set{\kappa_m}_{m=1}^{M}$ such that
\begin{equation}
\label{equation.proof of Chop Kill step 7}
\sum_{m=1}^{M}\kappa_m=\sum_{m=1}^{M}\alpha_m+\sum_{n=1}^{N-1}\mu_n,\qquad \eta_{p:m}\leq\kappa_m\leq\eta_{p+1;m},\quad\forall m=1,\dotsc,M.
\end{equation}
\end{lemma}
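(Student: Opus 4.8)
The plan is to establish the two assertions of the lemma separately. The monotonicity and boundary values are elementary. Fix $m$ and let $p$ increase from $1$ to $M+1$; then the subscript $m-p+1$ decreases, so by the nonincreasing hypothesis on $\set{\alpha_m}_{m=1}^{M}$ together with the convention $\alpha_k=\infty$ for $k\leq0$, the value $\alpha_{m-p+1}$ is nondecreasing in $p$, and hence so are $\min\set{\lambda_m,\alpha_{m-p+1}}$ and $\eta_{p;m}$. At $p=1$ the hypothesis $\alpha_m\leq\lambda_m$ gives $\min\set{\lambda_m,\alpha_m}=\alpha_m$, whence $\eta_{1;m}=\max\set{\lambda_{m+1},\alpha_m}$; at $p=M+1$ the subscript $m-M\leq0$ gives $\alpha_{m-M}=\infty$, so $\min\set{\lambda_m,\infty}=\lambda_m\geq\lambda_{m+1}$ and $\eta_{M+1;m}=\lambda_m$. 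I would also record that for each fixed $p$ the sequence $\set{\eta_{p;m}}_{m=1}^{M}$ is nonincreasing in $m$, being a pointwise maximum of the two sequences $m\mapsto\lambda_{m+1}$ and $m\mapsto\min\set{\lambda_m,\alpha_{m-p+1}}$, both of which are nonincreasing in $m$; this lets the $\set{\kappa_m}$ produced below be taken nonincreasing.

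For the second assertion, put $T:=\sum_{m=1}^{M}\alpha_m+\sum_{n=1}^{N-1}\mu_n$ and $S_p:=\sum_{m=1}^{M}\eta_{p;m}$ for $p=1,\dotsc,M+1$. The equality condition of~\eqref{equation.completion Schur-Horn} rewrites $T$ as $\sum_{m=1}^{M}\lambda_m-\mu_N$, so $T\leq\sum_{m=1}^{M}\lambda_m=S_{M+1}$, while by the first part $\set{S_p}_{p=1}^{M+1}$ is nondecreasing. The one substantive step is to show $S_1\leq T$. From $\eta_{1;m}=\max\set{\lambda_{m+1},\alpha_m}$ one gets $\eta_{1;m}-\alpha_m=(\lambda_{m+1}-\alpha_m)^+$, so $S_1\leq T$ is equivalent to $\sum_{m=1}^{M}(\lambda_{m+1}-\alpha_m)^+\leq\sum_{n=1}^{N-1}\mu_n$. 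Here the $m=M$ summand vanishes, as $\lambda_{M+1}=0\leq\alpha_M$, and after reindexing the remaining sum is $\sum_{m=2}^{M}(\lambda_m-\alpha_{m-1})^+$, which is precisely the left-hand side of the $j=2$ instance of~\eqref{equation.completion Schur-Horn} (the claim being vacuous when $M=1$); its right-hand side $\sum_{n=2}^{N}\mu_n$ is at most $\sum_{n=1}^{N-1}\mu_n$ since $\bfmu$ is nonincreasing. Hence $S_1\leq T$.

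With $S_1\leq T\leq S_{M+1}$ and $\set{S_p}_{p=1}^{M+1}$ nondecreasing, let $p$ be the largest index in $\set{1,\dotsc,M}$ with $S_p\leq T$ (this set is nonempty since $S_1\leq T$); then $S_p\leq T\leq S_{p+1}$, the upper bound holding because either $p=M$, whence $S_{p+1}=S_{M+1}\geq T$, or $p<M$, whence $S_{p+1}>T$ by maximality. Now set $t:=(T-S_p)/(S_{p+1}-S_p)\in[0,1]$ and $\kappa_m:=(1-t)\eta_{p;m}+t\,\eta_{p+1;m}$ when $S_p<S_{p+1}$, and $\kappa_m:=\eta_{p;m}$ when $S_p=S_{p+1}$ (in which latter case $\eta_{p;\cdot}=\eta_{p+1;\cdot}$, since $\eta_{p;m}\leq\eta_{p+1;m}$ for all $m$, and $T=S_p$). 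In either case $\eta_{p;m}\leq\kappa_m\leq\eta_{p+1;m}$ for all $m$, $\sum_{m=1}^{M}\kappa_m=(1-t)S_p+t\,S_{p+1}=T$, and $\set{\kappa_m}_{m=1}^{M}$ is nonincreasing as a convex combination of nonincreasing sequences; this is the required sequence satisfying~\eqref{equation.proof of Chop Kill step 7}.

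I expect the only nonroutine point to be the inequality $S_1\leq T$: the proof hinges on spotting the identity $\eta_{1;m}-\alpha_m=(\lambda_{m+1}-\alpha_m)^+$ and then handling the indices so that the resulting bound is recognizable as the $j=2$ case of~\eqref{equation.completion Schur-Horn}, slackened using the monotonicity of $\bfmu$. The monotonicity in $p$, the two boundary evaluations, and the construction of $\set{\kappa_m}$ by interpolation between consecutive chopped spectra are all routine.
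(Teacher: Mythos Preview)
Your proof is correct and follows essentially the same route as the paper's: monotonicity of $\eta_{p;m}$ in $p$ via monotonicity of $\alpha_{m-p+1}$, the two boundary evaluations, bounding the trace $T$ between $S_1$ and $S_{M+1}$ using the $j=2$ case of~\eqref{equation.completion Schur-Horn} together with $\mu_1\geq\mu_N$, and then interpolating between consecutive chopped spectra. The only cosmetic differences are that you select a specific $p$ (the largest one with $S_p\leq T$) and handle the degenerate case $S_p=S_{p+1}$ explicitly, whereas the paper simply asserts such a $p$ exists; you also record the extra fact that each $\set{\eta_{p;m}}_{m}$ is nonincreasing so that $\set{\kappa_m}$ can be taken nonincreasing, which the paper defers to the subsequent lemma.
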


\begin{proof}
For any $m=1,\dotsc,M$, the fact that \smash{$\set{\eta_{p:m}}_{p=1}^{M+1}$} is nondecreasing follows from the fact that \smash{$\set{\alpha_m}_{m=-\infty}^{M}$} is nonincreasing: \smash{$\alpha_{m-p+1}\leq\alpha_{m-p}$} and thus \smash{$\eta_{p;m}=\max\set{\lambda_{m+1},\min\set{\lambda_m,\alpha_{m-p+1}}}\leq\max\set{\lambda_{m+1},\min\set{\lambda_m,\alpha_{m-p}}}=\eta_{p+1;m}$} for all $p=1,\dotsc,M$.
Next, since $\alpha_m\leq\lambda_m$ for all $m$, the $p=1$ case of~\eqref{equation.proof of Chop Kill step 1} reduces to
\begin{equation*}
\eta_{1;m}
=\max\set{\lambda_{m+1},\min\set{\lambda_m,\alpha_{m}}}
=\max\set{\lambda_{m+1},\alpha_m},\quad\forall m=1,\dotsc,M,
\end{equation*}
as claimed.
Similarly, since $\set{\lambda_m}_{m=1}^{M+1}$ is nonincreasing and $\alpha_m:=\infty$ for all $m\leq0$, the $p=M+1$ case of~\eqref{equation.proof of Chop Kill step 1} becomes
\begin{equation*}
\eta_{M+1;m}=\max\set{\lambda_{m+1},\min\set{\lambda_m,\alpha_{m-M}}}
=\max\set{\lambda_{m+1},\min\set{\lambda_m,\infty}}
=\max\set{\lambda_{m+1},\lambda_m}
=\lambda_m,
\quad\forall m=1,\dotsc,M.
\end{equation*}

To prove there exists $p$ and $\set{\kappa_m}_{m=1}^{M}$ such that~\eqref{equation.proof of Chop Kill step 7} holds,
consider the trace $\tau_p:=\sum_{m=1}^{M}\eta_{p;m}$ of each chopped spectrum.
Since $\set{\eta_{p:m}}_{p=1}^{M+1}$ is nondecreasing for each $m=1,\dotsc,M$ we know that $\set{\tau_p}_{p=1}^{M+1}$ is also nondecreasing.
Moreover, the equality condition in our assumption~\eqref{equation.completion Schur-Horn} along with the fact that $\mu_N\geq0$ imply that $\tau_{M+1}$ is an upper bound for the quantity $\sigma:=\sum_{m=1}^{M}\alpha_m+\sum_{n=1}^{N-1}\mu_n$, which is intended to be the trace of our desired spectrum $\set{\kappa_m}_{m=1}^{M}$:
\begin{equation*}
\tau_{M+1}
=\sum_{m=1}^{M}\eta_{M+1;m}
=\sum_{m=1}^{M}\lambda_m
=\sum_{m=1}^{M}\alpha_m+\sum_{n=1}^{N}\mu_n
\geq\sum_{m=1}^{M}\alpha_m+\sum_{n=1}^{N-1}\mu_n
=\sigma.
\end{equation*}
We further claim $\sigma$ is bounded below by $\tau_1$.
To see this, first note that
\begin{equation*}
\tau_1
=\sum_{m=1}^{M}\eta_{1;m}
=\sum_{m=1}^{M}\max\set{\lambda_{m+1},\alpha_m}
=\sum_{m=1}^{M}\max\set{\lambda_{m+1}-\alpha_m,0}+\sum_{m=1}^{M}\alpha_m
=\sum_{m=1}^{M}(\lambda_{m+1}-\alpha_m)^++\sum_{m=1}^{M}\alpha_m.
\end{equation*}
Next, recall that $\lambda_{M+1}:=0$ and so $(\lambda_{M+1}-\alpha_M)^+=0$, implying
\begin{equation*}
\tau_1
=\sum_{m=1}^{M-1}(\lambda_{m+1}-\alpha_m)^++\sum_{m=1}^{M}\alpha_m
=\sum_{m=2}^{M}(\lambda_m-\alpha_{m-1})^++\sum_{m=1}^{M}\alpha_m.
\end{equation*}
Invoking our assumption~\eqref{equation.completion Schur-Horn} in the $j=2$ case and then using the fact that $\mu_1\geq\mu_N$ then gives our claim:
\begin{equation*}
\tau_1
=\sum_{m=2}^{M}(\lambda_m-\alpha_{m-1})^++\sum_{m=1}^{M}\alpha_m
\leq\sum_{n=2}^{N}\mu_n+\sum_{m=1}^{M}\alpha_m
\leq\sum_{n=1}^{N-1}\mu_n+\sum_{m=1}^{M}\alpha_m
=\sigma.
\end{equation*}
A technicality: using $j=2$ in~\eqref{equation.completion Schur-Horn} implicitly assumes that $M\geq2$; fortunately, the above inequality also holds when $M=1$ since in that case $\sum_{m=2}^{M}(\lambda_m-\alpha_{m-1})^+=0\leq\sum_{n=2}^{N}\mu_n$.

Having that \smash{$\set{\tau_p}_{p=1}^{M+1}$} is nondecreasing with $\tau_1\leq\sigma\leq \tau_{M+1}$, there exists at least one index $p$ with $1\leq p\leq M$ and such that $\tau_p\leq\sigma\leq\tau_{p+1}$.
Fixing such an index $p$, let $\set{\kappa_m}_{m=1}^{M}$ be any sequence such that~\eqref{equation.proof of Chop Kill step 7} holds.
Such a sequence always exists: since $\tau_p\leq\sigma\leq\tau_{p+1}$, there exists $t\in[0,1]$ such that $\sigma=\tau_p+(\tau_{p+1}-\tau_p)t$ and we can let $\kappa_m:=\eta_{p;m}+(\eta_{p+1;m}-\eta_{p;m})t$, for example.
\end{proof}

To recap, our goal for the rest of this section is to prove the $(\Leftarrow)$ direction of Theorem~\ref{theorem.main result 1}.
Here, $\set{\lambda_m}_{m=1}^{M}$, $\set{\alpha_m}_{m=1}^{M}$ and $\set{\mu_n}_{n=1}^{N}$ are nonnegative nonincreasing sequences that satisfy~\eqref{equation.completion Schur-Horn} with $\lambda_m\geq\alpha_m$ for all $m$.
In light of Lemma~\ref{lemma.eigensteps}, it suffices to construct a corresponding sequence of eigensteps from $\set{\alpha_m}_{m=1}^{M}$ to $\set{\lambda_m}_{m=1}^{M}$.
Inspired by the Top Kill algorithm of~\cite{FickusMPS13}, we construct these eigensteps iteratively, working backwards from $\set{\lambda_m}_{m=1}^{M}$ to $\set{\alpha_m}_{m=1}^{M}$.
Here, what we really need is a good strategy for ``excavating" a spectrum $\set{\kappa_m}_{m=1}^{M}$ from $\set{\lambda_m}_{m=1}^{M}$.
In Lemma~\ref{lemma.chopped spectra} we propose one such strategy, choosing $\set{\kappa_m}_{m=1}^{M}$ to lie between two chopped spectra of $\set{\lambda_m}_{m=1}^{M}$.
In the next result, we show that any $\set{\kappa_m}_{m=1}^{M}$ chosen in this way is indeed one backwards-eigenstep from $\set{\lambda_m}_{m=1}^{M}$,
having the requisite trace and interlacing properties.
Most importantly, we show that choosing $\set{\kappa_m}_{m=1}^{M}$ in this way ensures that it, like $\set{\lambda_m}_{m=1}^{M}$, satisfies the generalized majorization condition~\eqref{equation.completion Schur-Horn}, albeit for $\set{\mu_n}_{n=1}^{N-1}$ instead of $\set{\mu_n}_{n=1}^{N}$.
As detailed at the end of this section, this allows us to repeatedly use the method of Lemma~\ref{lemma.chopped spectra}, that is, to repeatedly take backwards eigensteps, to arrive at $\set{\alpha_m}_{m=1}^{M}$.

\begin{lemma}
\label{lemma.Chop Kill step}
Let $M$ and $N$ be positive integers and let $\set{\alpha_m}_{m=1}^{M}$, $\set{\lambda_m}_{m=1}^{M}$ and $\set{\mu_n}_{n=1}^{N}$ be any nonnegative  nonincreasing sequences with $\alpha_m\leq\lambda_m$ for all $m$ that also satisfy~\eqref{equation.completion Schur-Horn}.
Then, for any index $p=1,\dotsc,M$ and sequence $\set{\kappa_m}_{m=1}^{M}$ that satisfy~\eqref{equation.proof of Chop Kill step 7},
we have $\set{\kappa_m}_{m=1}^{M}$ is nonincreasing with $\kappa_m\geq\alpha_m$ for all $m$.
Moreover, $\set{\lambda_m}_{m=1}^{M}$ interlaces over $\set{\kappa_m}_{m=1}^{M}$ and
\begin{equation}
\label{equation.Chop Kill step 2}
\sum_{m=1}^{M}(\kappa_m-\alpha_m)=\sum_{n=1}^{N-1}\mu_n,
\qquad
\sum_{m=j}^{M}(\kappa_m-\alpha_{m-j+1})^+\leq\sum_{n=j}^{N-1}\mu_n,\quad\forall j=1,\dotsc,M.
\end{equation}
Moreover, when $N=1$ we necessarily have $\kappa_m=\alpha_m$ for all $m=1,\dotsc,M$.
\end{lemma}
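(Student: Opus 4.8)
The plan is to read everything off the sandwich $\eta_{p;m}\le\kappa_m\le\eta_{p+1;m}$ of~\eqref{equation.proof of Chop Kill step 7}, together with the three structural facts about chopped spectra already established in Lemma~\ref{lemma.chopped spectra}: the sequence $\set{\eta_{p;m}}_{p=1}^{M+1}$ is nondecreasing, $\eta_{1;m}=\max\set{\lambda_{m+1},\alpha_m}$, and $\eta_{M+1;m}=\lambda_m$.

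First I would dispatch the elementary properties. The sandwich and these facts give, for every $m$,
\[
\alpha_m\le\eta_{1;m}\le\eta_{p;m}\le\kappa_m\le\eta_{p+1;m}\le\eta_{M+1;m}=\lambda_m,
\]
so $\kappa_m\ge\alpha_m$; moreover $\lambda_{m+1}\le\max\set{\lambda_{m+1},\alpha_m}=\eta_{1;m}\le\kappa_m\le\lambda_m$, which is exactly the interlacing of $\set{\lambda_m}_{m=1}^{M}$ over $\set{\kappa_m}_{m=1}^{M}$ (recall $\lambda_{M+1}:=0$), and $\kappa_{m+1}\le\lambda_{m+1}\le\kappa_m$ shows $\set{\kappa_m}_{m=1}^{M}$ is nonincreasing. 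The equality in~\eqref{equation.Chop Kill step 2} is then just the trace identity of~\eqref{equation.proof of Chop Kill step 7} with $\sum_{m=1}^{M}\alpha_m$ moved across; since each $\kappa_m-\alpha_m\ge0$, this equality instantly yields the $j=1$ inequality of~\eqref{equation.Chop Kill step 2} (with equality), and when $N=1$ it reads $\sum_{m=1}^{M}(\kappa_m-\alpha_m)=0$, forcing $\kappa_m=\alpha_m$ for all $m$.

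The substance is the inequality in~\eqref{equation.Chop Kill step 2} for $2\le j\le M$, and here I would split on whether $j>p$ or $j\le p$. When $j>p$, the pointwise majorant $\kappa_m\le\eta_{p+1;m}$ already suffices: for $m\ge j>p$ the formula~\eqref{equation.proof of Chop Kill step 1} gives $\eta_{p+1;m}=\max\set{\lambda_{m+1},\min\set{\lambda_m,\alpha_{m-p}}}$, and $m-p\ge m-j+1$ forces $\alpha_{m-p}\le\alpha_{m-j+1}$, so after subtracting $\alpha_{m-j+1}$ and taking positive parts the middle term drops out, leaving $(\kappa_m-\alpha_{m-j+1})^+\le(\eta_{p+1;m}-\alpha_{m-j+1})^+=(\lambda_{m+1}-\alpha_{m-j+1})^+$. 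Shifting the summation index by one (the $m=M+1$ term vanishes since $\lambda_{M+1}=0$ and $\alpha\ge0$) turns $\sum_{m=j}^{M}(\lambda_{m+1}-\alpha_{m-j+1})^+$ into $\sum_{m=j+1}^{M}(\lambda_m-\alpha_{m-(j+1)+1})^+$, which the hypothesis~\eqref{equation.completion Schur-Horn} at index $j+1$ bounds by $\sum_{n=j+1}^{N}\mu_n$, and $\sum_{n=j+1}^{N}\mu_n\le\sum_{n=j}^{N-1}\mu_n$ because $\bfmu$ is nonincreasing.

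When $2\le j\le p$ the bound $\kappa_m\le\eta_{p+1;m}$ is too crude (the total mass of $\set{\eta_{p+1;m}}$ may exceed $\sum_m\alpha_m+\sum_{n=1}^{N-1}\mu_n$), so I would instead use the equality just proved to recast the target. Writing $\sum_{n=j}^{N-1}\mu_n=\sum_{m=1}^{M}(\kappa_m-\alpha_m)-\sum_{n=1}^{j-1}\mu_n$ and using, for $\kappa_m\ge\alpha_m$, the identity $(\kappa_m-\alpha_m)-(\kappa_m-\alpha_{m-j+1})^+=\min\set{\kappa_m,\alpha_{m-j+1}}-\alpha_m$, the desired inequality becomes equivalent to
\[
\sum_{m=1}^{j-1}(\kappa_m-\alpha_m)+\sum_{m=j}^{M}\bigparen{\min\set{\kappa_m,\alpha_{m-j+1}}-\alpha_m}\ge\sum_{n=1}^{j-1}\mu_n.
\]
Now I would bound $\kappa_m$ below by $\eta_{p;m}\ge\min\set{\lambda_m,\alpha_{m-p+1}}$; since $j\le p$ gives $\alpha_{m-p+1}\ge\alpha_{m-j+1}$, each term on the left is at least $\min\set{\lambda_m,\alpha_{m-j+1}}-\alpha_m$ (for $m\le j-1$ this equals $\lambda_m-\alpha_m$, since $\alpha_{m-j+1}=\infty$). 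Summing over all $m$ and applying the same identity in reverse collapses the bound to $\sum_{m=1}^{M}(\lambda_m-\alpha_m)-\sum_{m=j}^{M}(\lambda_m-\alpha_{m-j+1})^+$, which by the equality and the index-$j$ inequality of~\eqref{equation.completion Schur-Horn} is at least $\sum_{n=1}^{N}\mu_n-\sum_{n=j}^{N}\mu_n=\sum_{n=1}^{j-1}\mu_n$, as needed. I expect the main obstacle to be precisely this dichotomy: recognizing that the ``deep'' inequalities ($j>p$) follow for free from a pointwise majorant, whereas the ``shallow'' ones ($j\le p$) genuinely require the exact trace of $\set{\kappa_m}$, and then coaxing each side into a form in which~\eqref{equation.completion Schur-Horn}---at index $j+1$ in the first case, at index $j$ in the second---can be quoted directly.
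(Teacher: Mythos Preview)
Your proof is correct and shares the paper's overall dichotomy (split on $j>p$ versus $j\le p$, invoking~\eqref{equation.completion Schur-Horn} at index $j+1$ and $j$ respectively), but the technical execution is different and more elementary. The paper's argument rests on an interval--length decomposition,
\[
\sum_{m=j}^{M}(\gamma_m-\alpha_{m-j+1})^+=\sum_{k=j}^{M}\sum_{m=k}^{M}\ell\bigset{[0,\gamma_m)\cap[\alpha_{m-k+1},\alpha_{m-k})},
\]
and then establishes, layer by layer, that the $k$th inner sum with $\gamma=\kappa$ coincides with the one for $\gamma=\lambda$ when $k\le p-1$, and with the one for $\gamma_m=\lambda_{m+1}$ when $k\ge p+1$; these equalities are reassembled to handle each range of $j$. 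You bypass this machinery: for $j>p$ you read off directly that $(\eta_{p+1;m}-\alpha_{m-j+1})^+=(\lambda_{m+1}-\alpha_{m-j+1})^+$, and for $j\le p$ you recast the target via the identity $(\kappa_m-\alpha_m)-(\kappa_m-\alpha_{m-j+1})^+=\min\set{\kappa_m,\alpha_{m-j+1}}-\alpha_m$ and then use the pointwise lower bound $\kappa_m\ge\eta_{p;m}\ge\min\set{\lambda_m,\alpha_{m-p+1}}$ (which, crucially, equals $\lambda_m$ for $m\le p-1$, so the first block of terms is handled too). Your route is shorter; the paper's layer decomposition yields intermediate \emph{equalities} rather than inequalities and gives a clearer geometric picture of which layers of stone the step $\lambda\to\kappa$ actually removes. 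One minor point: in your $j\le p$ argument the rewriting $\sum_{n=j}^{N-1}\mu_n=\sum_{n=1}^{N-1}\mu_n-\sum_{n=1}^{j-1}\mu_n$ tacitly assumes $j\le N$; when $N<j\le p$ your chain of inequalities in fact delivers $\sum_{m=j}^{M}(\kappa_m-\alpha_{m-j+1})^+\le -\mu_N$, forcing $\mu_N=0$ and hence the desired bound of zero---the paper flags exactly this corner case at the end of its proof.
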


\begin{proof}
Fix any index $p=1,\dotsc,M$ and sequence $\set{\kappa_m}_{m=1}^{M}$ that satisfy~\eqref{equation.proof of Chop Kill step 7};
by Lemma~\ref{lemma.chopped spectra}, we know at least one such index and spectrum exist.
Note Lemma~\ref{lemma.chopped spectra} also gives
$\max\set{\lambda_{m+1},\alpha_m}
=\eta_{1;m}
\leq\eta_{p;m}
\leq\kappa_m
\leq\eta_{p+1;m}
\leq\eta_{M+1;m}
=\lambda_m$
for all $m=1,\dotsc,M$.
In particular, $\set{\kappa_m}_{m=1}^{M}$ satisfies $\kappa_m\geq\alpha_m$ for all $m$.
This same inequality implies $\set{\kappa_m}_{m=1}^{M}$ satisfies the interlacing condition $\lambda_{m+1}\leq\kappa_m\leq\lambda_m$ for all $m=1,\dotsc,M$, which in turn implies that $\set{\kappa_m}_{m=1}^{M}$ is nonincreasing.
Moreover, the equality in~\eqref{equation.Chop Kill step 2} is simply a rewriting of the equality in our assumption~\eqref{equation.proof of Chop Kill step 7}.
Note that when $N=1$, this equality becomes $\sum_{m=1}^{M}(\kappa_m-\alpha_m)=0$;
when combined with the fact that $\kappa_m\geq\alpha_m$, this implies that in this special case we necessarily have $\kappa_m=\alpha_m$ for all $m$.

The remainder of this proof is devoted to showing that $\set{\kappa_m}_{m=1}^{M}$ satisfies the inequality conditions in~\eqref{equation.Chop Kill step 2}.
The argument is complicated; for a geometric motivation of it, we refer the interested reader to the alternative, longer presentation given in~\cite{Poteet12}.
The key idea is to recognize that for any $\gamma\geq0$ and any $j,m=1,\dotsc,M$ with $j\leq m$, the quantity $(\gamma-\alpha_{m-j+1})^+$ corresponds to the length of the intersection of the intervals $[0,\gamma)$ and $[\alpha_{m-j+1},\infty)$ and moreover, that this intersection can be decomposed according to the partition \smash{$[\alpha_{m-j+1},\infty)=\sqcup_{i=0}^{m-j}[\alpha_{i+1},\alpha_i)$}; here, we continue the convention of defining $\alpha_{M+1}:=0$ and $\alpha_0:=\infty$.
In particular, for any nonnegative sequence $\set{\gamma_m}_{m=1}^{M}$ and any $j=1,\dotsc,M$,
\begin{equation*}
\sum_{m=j}^{M}(\gamma_m-\alpha_{m-j+1})^+
=\sum_{m=j}^{M}\ell\bigset{[0,\gamma_m)\cap[\alpha_{m-j+1},\infty)}
=\sum_{m=j}^{M}\sum_{i=0}^{m-j}\ell\bigset{[0,\gamma_m)\cap[\alpha_{i+1},\alpha_i)}.
\end{equation*}
Making the change of variables $k=m-i$ and then interchanging sums gives
\begin{equation}
\label{equation.proof of Chop Kill step 8}
\sum_{m=j}^{M}(\gamma_m-\alpha_{m-j+1})^+
=\sum_{m=j}^{M}\sum_{k=j}^{m}\ell\bigset{[0,\gamma_m)\cap[\alpha_{m-k+1},\alpha_{m-k})}
=\sum_{k=j}^{M}\sum_{m=k}^{M}\ell\bigset{[0,\gamma_m)\cap[\alpha_{m-k+1},\alpha_{m-k})}.
\end{equation}
We now compare the value of $\sum_{m=k}^{M}\ell\bigset{[0,\gamma_m)\cap[\alpha_{m-k+1},\alpha_{m-k})}$ when $\gamma_m=\kappa_m$ to the value of this same sum when $\gamma_m=\lambda_m$.
This comparison will depend on the relationship between $k$ and $p$, where recall $p$ was chosen so that $\sigma$ satisfies~\eqref{equation.proof of Chop Kill step 7}.
For example, we now show these two sums are equal in the case where $k\leq p-1$.

To be precise, take any $k$ such that $1\leq k\leq p-1$; note this part of the argument is vacuous in the $p=1$ case.
The construction of $\set{\kappa_m}_{m=1}^{M}$ in~\eqref{equation.proof of Chop Kill step 7} along with the definition of the chopped spectra~\eqref{equation.proof of Chop Kill step 1} gives
\begin{equation}
\label{equation.proof of Chop Kill step 9}
\kappa_m
\geq\eta_{p;m}
=\max\set{\lambda_{m+1},\min\set{\lambda_m,\alpha_{m-p+1}}}
\geq\min\set{\lambda_m,\alpha_{m-p+1}}.
\end{equation}
Moreover, since $\set{\lambda_m}_{m=1}^{M}$ interlaces over $\set{\kappa_m}_{m=1}^{M}$, we also have $\kappa_m\leq\lambda_m$.
Note that if $\kappa_m<\lambda_m\leq\alpha_{m-p+1}$, the previous two facts together imply $\lambda_m=\min\set{\lambda_m,\alpha_{m-p+1}}\leq\kappa_m<\lambda_m$, a contradiction.
In particular, if $\kappa_m<\lambda_m$ we necessarily have $\lambda_m>\alpha_{m-p+1}$ at which point~\eqref{equation.proof of Chop Kill step 9} gives $\kappa_m\geq\min\set{\lambda_m,\alpha_{m-p+1}}=\alpha_{m-p+1}$.
To summarize, for any $m=1,\dotsc,M$ we either have that $\kappa_m=\lambda_m$ or that $\alpha_{m-p+1}\leq\kappa_m<\lambda_m$.
Further note that for any $m=k,\dotsc,M$ the fact that $k\leq p-1$ implies $m-k\geq m-p+1$ and so $\alpha_{m-k}\leq\alpha_{m-p+1}$.
Thus, for any such $m$ we either have that the intervals $[0,\kappa_m)$ and $[0,\lambda_m)$ are equal or that both contain the interval $[\alpha_{m-k+1},\alpha_{m-k})$.
This implies
\begin{equation}
\label{equation.proof of Chop Kill step 10}
\sum_{m=k}^{M}\ell\bigset{[0,\kappa_m)\cap[\alpha_{m-k+1},\alpha_{m-k})}
=\sum_{m=k}^{M}\ell\bigset{[0,\lambda_m)\cap[\alpha_{m-k+1},\alpha_{m-k})},\quad 1\leq k\leq p-1.
\end{equation}

Next consider any $k$ with $p+1\leq k\leq M$; this is vacuous when $p=M$.
Here~\eqref{equation.proof of Chop Kill step 1} and~\eqref{equation.proof of Chop Kill step 7} give
\begin{equation}
\label{equation.proof of Chop Kill step 11}
\kappa_m
\leq\eta_{p+1;m}
=\max\set{\lambda_{m+1},\min\set{\lambda_m,\alpha_{m-(p+1)+1}}}
=\max\set{\lambda_{m+1},\min\set{\lambda_m,\alpha_{m-p}}}.
\end{equation}
Since $\set{\lambda_m}_{m=1}^{M}$ interlaces over $\set{\kappa_m}_{m=1}^{M}$ we also have $\kappa_m\geq\lambda_{m+1}$.
If $\kappa_m>\lambda_{m+1}\geq\min\set{\lambda_m,\alpha_{m-p}}$ these facts imply $\lambda_{m+1}>\lambda_{m+1}$, a contradiction.
In particular, if $\kappa_m>\lambda_{m+1}$ we necessarily have $\lambda_{m+1}<\min\set{\lambda_m,\alpha_{m-p}}$ at which point~\eqref{equation.proof of Chop Kill step 11} gives $\kappa_m\leq\min\set{\lambda_m,\alpha_{m-p}}$.
Thus, for any $m=1,\dotsc,M$ we either have $\kappa_m=\lambda_{m+1}$ or $\lambda_{m+1}<\kappa_m\leq\min\set{\lambda_m,\alpha_{m-p}}$.
Moreover, for any $m=k,\dotsc,M$ the fact that $p+1\leq k$ gives $m-k+1\leq m-p$ and so $\alpha_{m-k+1}\geq\alpha_{m-p}\geq\min\set{\lambda_m,\alpha_{m-p}}$.
As such, for any $m=k,\dotsc,M$ we either have the intervals $[0,\kappa_m)$ and $[0,\lambda_{m+1})$ are equal or that both are disjoint from the interval $[\alpha_{m-k+1},\alpha_{m-k})$, implying
\begin{equation}
\label{equation.proof of Chop Kill step 12}
\sum_{m=k}^{M}\ell\bigset{[0,\kappa_m)\cap[\alpha_{m-k+1},\alpha_{m-k})}
=\sum_{m=k}^{M}\ell\bigset{[0,\lambda_{m+1})\cap[\alpha_{m-k+1},\alpha_{m-k})},\quad p+1\leq k\leq M.
\end{equation}

With~\eqref{equation.proof of Chop Kill step 10} and~\eqref{equation.proof of Chop Kill step 12} in hand, we now consider~\eqref{equation.proof of Chop Kill step 8} in the cases where $\set{\gamma_m}_{m=1}^{M}$ is $\set{\kappa_m}_{m=1}^{M}$ and $\set{\lambda_{m+1}}_{m=1}^{M}$, respectively.
In particular, for any $j$ such that $p+1\leq j\leq M$ note that $k\geq p+1$ for all $k\geq j$.
As such, in this case we can let $\gamma_m=\kappa_m$ in~\eqref{equation.proof of Chop Kill step 8} and apply~\eqref{equation.proof of Chop Kill step 12} for every $k$:
\begin{equation*}
\sum_{m=j}^{M}(\kappa_m-\alpha_{m-j+1})^+
=\sum_{k=j}^{M}\sum_{m=k}^{M}\ell\bigset{[0,\kappa_m)\cap[\alpha_{m-k+1},\alpha_{m-k})}
=\sum_{k=j}^{M}\sum_{m=k}^{M}\ell\bigset{[0,\lambda_{m+1})\cap[\alpha_{m-k+1},\alpha_{m-k})},\quad p+1\leq j\leq M.
\end{equation*}
To further simplify this expression we let $\gamma_m=\lambda_{m+1}$ in~\eqref{equation.proof of Chop Kill step 8}, recall that $\lambda_{M+1}:=0$, and replace ``$m$" with $m-1$:
\begin{equation}
\label{equation.proof of Chop Kill step 13}
\sum_{m=j}^{M}(\kappa_m-\alpha_{m-j+1})^+
=\sum_{m=j}^{M}(\lambda_{m+1}-\alpha_{m-j+1})^+
=\sum_{m=j}^{M-1}(\lambda_{m+1}-\alpha_{m-j+1})^+
=\sum_{m=j+1}^{M}(\lambda_m-\alpha_{m-j})^+,\quad p+1\leq j\leq M.
\end{equation}
Independent from this line of reasoning, note that replacing ``$j$" with $j+1$ in our assumption~\eqref{equation.completion Schur-Horn} gives
\begin{equation}
\label{equation.proof of Chop Kill step 14}
\sum_{m=j+1}^{M}(\lambda_m-\alpha_{m-j})^+\leq\sum_{n=j+1}^{N}\mu_n,\quad 1\leq j+1\leq M.
\end{equation}
Moreover, \smash{$\sum_{n=j+1}^{N}\mu_n\leq\sum_{n=j}^{N-1}\mu_n$} for all $j\geq 1$: if $j+1>N$ the left-hand side is zero, while if $j+1\leq N$ the fact that $\set{\mu_n}_{n=1}^{N}$ is nonincreasing gives $\sum_{n=j}^{N-1}\mu_n=(\mu_j-\mu_N)+\sum_{n=j+1}^{N}\mu_n\geq \sum_{n=j+1}^{N}\mu_n$.
Combining this fact with~\eqref{equation.proof of Chop Kill step 13} and~\eqref{equation.proof of Chop Kill step 14} then gives our claimed inequality in~\eqref{equation.Chop Kill step 2} in the special case where $p+1\leq j\leq M-1$:
\begin{equation*}
\sum_{m=j}^{M}(\kappa_m-\alpha_{m-j+1})^+
\leq\sum_{n=j+1}^{N}\mu_n
\leq\sum_{n=j}^{N-1}\mu_n,
\quad p+1\leq j\leq M-1.
\end{equation*}
Furthermore, \eqref{equation.Chop Kill step 2} immediately holds if $p+1\leq j=M$ since in this case~\eqref{equation.proof of Chop Kill step 13} gives $\sum_{m=M}^{M}(\kappa_m-\alpha_{m-M+1})^+=0$.

To summarize, we are in the process of showing that the inequality in~\eqref{equation.Chop Kill step 2} holds for all $j=1,\dotsc,M$ and so far, we have shown that it indeed does whenever $p+1\leq j\leq M$.
Since $1\leq p\leq M$ by assumption, what remains are the cases where $p=M$, $j=1,\dotsc,M$ and where $p+1\leq M$, $j<p+1$;
together these correspond to simply when $1\leq j\leq p$.
To prove the inequality in~\eqref{equation.Chop Kill step 2} holds for any $j=1,\dotsc,p$, we again let $\gamma_m=\kappa_m$ in~\eqref{equation.proof of Chop Kill step 8}:
\begin{equation}
\label{equation.proof of Chop Kill step 15}
\sum_{m=j}^{M}(\kappa_m-\alpha_{m-j+1})^+
=\sum_{k=j}^{M}\sum_{m=k}^{M}\ell\bigset{[0,\kappa_m)\cap[\alpha_{m-k+1},\alpha_{m-k})},
\quad 1\leq j\leq M.
\end{equation}
Note that in the $j=1$ case, the fact that $\kappa_m\geq\alpha_m$ along with the equality in~\eqref{equation.proof of Chop Kill step 7} gives
\begin{equation}
\label{equation.proof of Chop Kill step 16}
\sum_{n=1}^{N-1}\mu_n
=\sum_{m=1}^{M}(\kappa_m-\alpha_m)
=\sum_{m=1}^{M}(\kappa_m-\alpha_m)^+
=\sum_{k=1}^{M}\sum_{m=k}^{M}\ell\bigset{[0,\kappa_m)\cap[\alpha_{m-k+1},\alpha_{m-k})}.
\end{equation}
Subtracting~\eqref{equation.proof of Chop Kill step 15} from~\eqref{equation.proof of Chop Kill step 16} then gives
\begin{equation*}
\sum_{n=1}^{N-1}\mu_n-\sum_{m=j}^{M}(\kappa_m-\alpha_{m-j+1})^+=\sum_{k=1}^{j-1}\sum_{m=k}^{M}\ell\bigset{[0,\kappa_m)\cap[\alpha_{m-k+1},\alpha_{m-k})}, \quad 1\leq j\leq M.
\end{equation*}
In particular, for any $j=1,\dotsc,p$ we have $k\leq p-1$ whenever $1\leq k\leq j-1$ and so we may use~\eqref{equation.proof of Chop Kill step 10} to rewrite the right-hand side of the above equation:
\begin{equation}
\label{equation.proof of Chop Kill step 17}
\sum_{n=1}^{N-1}\mu_n-\sum_{m=j}^{M}(\kappa_m-\alpha_{m-j+1})^+=\sum_{k=1}^{j-1}\sum_{m=k}^{M}\ell\bigset{[0,\lambda_m)\cap[\alpha_{m-k+1},\alpha_{m-k})}, \quad 1\leq j\leq p.
\end{equation}
We now repeat this same process, starting with $\lambda_m$ instead of $\kappa_m$.
To be precise, subtracting~\eqref{equation.proof of Chop Kill step 8} from the $j=1$ case of itself, then letting $\gamma_m=\lambda_m$ and using the equality assumption of \eqref{equation.completion Schur-Horn} gives
\begin{equation}
\label{equation.proof of Chop Kill step 18}
\sum_{n=1}^{N-1}\mu_n-\sum_{m=j}^{M}(\lambda_m-\alpha_{m-j+1})^+
=\sum_{m=1}^{M}(\lambda_m-\alpha_{m-j+1})^+-\sum_{m=j}^{M}(\lambda_m-\alpha_{m-j+1})^+
=\sum_{k=1}^{j-1}\sum_{m=k}^{M}\ell\bigset{[0,\lambda_m)\cap[\alpha_{m-k+1},\alpha_{m-k})}
\end{equation}
for all $j=1,\dotsc,M$.
For any $j=1,\dotsc,p$, equating~\eqref{equation.proof of Chop Kill step 17} and~\eqref{equation.proof of Chop Kill step 18} and simplifying then gives
\begin{equation*}
\sum_{m=j}^{M}(\kappa_m-\alpha_{m-j+1})^+
=\sum_{m=j}^{M}(\lambda_m-\alpha_{m-j+1})^+-\mu_N,
\quad 1\leq j\leq p,
\end{equation*}
at which point, our assumption~\eqref{equation.completion Schur-Horn} gives the $j$th desired inequality of~\eqref{equation.Chop Kill step 2} in the remaining case where $j=1,\dotsc,p$:
\begin{equation*}
\sum_{m=j}^{M}(\kappa_m-\alpha_{m-j+1})^+
=\sum_{m=j}^{M}(\lambda_m-\alpha_{m-j+1})^+-\mu_N
\leq\sum_{n=j}^{N}\mu_n-\mu_N
=\sum_{n=j}^{N-1}\mu_n,
\quad 1\leq j\leq p.
\end{equation*}
Though obvious in the case where $j\leq N$, the final equality above has a subtle justification in the case where $j>N$:
here we have $0\leq \sum_{m=j}^{M}(\kappa_m-\alpha_{m-j+1})^+\leq-\mu_N$ which requires $\mu_N=0$, implying $\sum_{n=j}^{N}\mu_n-\mu_N=0-0=0=\sum_{n=j}^{N-1}\mu_n$.
\end{proof}

We now use Lemmas~\ref{lemma.eigensteps}, \ref{lemma.chopped spectra} and~\ref{lemma.Chop Kill step} to prove the ``if" direction of Theorem~\ref{theorem.main result 1}.
\begin{proof}[Proof of the ($\Leftarrow$) direction of Theorem~\ref{theorem.main result 1}]
Assume $\bfalpha=\set{\alpha_m}_{m=1}^{M}$, $\bflambda=\set{\lambda_m}_{m=1}^{M}$ and $\bfmu=\set{\mu_n}_{n=1}^{N}$ are nonnegative nonincreasing sequences with $\alpha_m\leq\lambda_m$ for all $m$ which satisfy~\eqref{equation.completion Schur-Horn}.
To show that $\bflambda$ is an $(\bfalpha,\bfmu)$-completion, it suffices by Lemma~\ref{lemma.eigensteps} to construct a sequence of eigensteps $\set{\set{\lambda_{P;m}}_{m=1}^{M}}_{P=0}^{N}$ from $\bfalpha$ to $\bflambda$ with lengths $\bfmu$, cf.\ Definition~\ref{definition.eigensteps}.
We construct these eigensteps iteratively: let $\lambda_{N;m}:=\lambda_m$ for all $m$ as required by condition (i) of Definition~\ref{definition.eigensteps}; for any given $P=1,\dotsc,N$, apply Lemma~\ref{lemma.chopped spectra} with
``$N$", ``$\set{\lambda_m}_{m=1}^{M}$" and ``$\set{\mu_n}_{n=1}^{N}$" being $P$, $\set{\lambda_{P;m}}_{m=1}^{M}$ and $\set{\mu_n}_{n=1}^{P}$, respectively, and define $\set{\lambda_{P-1;m}}_{m=1}^{M}$  to be the resulting sequence $\set{\kappa_m}_{m=1}^{M}$.
This construction is well-defined.
Indeed, our assumption~\eqref{equation.completion Schur-Horn} means that $\set{\lambda_{N;m}}_{m=1}^{M}=\set{\lambda_m}_{m=1}^{M}$ and $\set{\mu_n}_{n=1}^{N}$ satisfy the hypotheses of Lemma~\ref{lemma.chopped spectra}.
Moreover, for any given $P=1,\dotsc,N$, if $\set{\lambda_{P;m}}_{m=1}^{M}$ and $\set{\mu_n}_{n=1}^{P}$ satisfy the hypotheses of Lemma~\ref{lemma.chopped spectra}, then Lemma~\ref{lemma.Chop Kill step} guarantees that $\set{\lambda_{P-1;m}}_{m=1}^{M}$ and $\set{\mu_n}_{n=1}^{P-1}$ also satisfy these same hypotheses.
In particular, we necessarily have $\sum_{m=1}^{M}(\lambda_{P;m}-\alpha_m)=\sum_{n=1}^{P}\mu_n$ for all $P=1,\dotsc,N$, meaning condition (iii) of Definition~\ref{definition.eigensteps} holds for such $P$.
Further note that in the $P=1$ case, Lemmas~\ref{lemma.chopped spectra} and~\ref{lemma.Chop Kill step} imply $\set{\lambda_{0;m}}_{m=1}^{M}$ can and must be defined as $\set{\alpha_m}_{m=1}^{M}$ meaning we have satisfied both condition (i) as well as the $P=0$ case of condition (iii).
Finally, for any $P=1,\dotsc,N$, Lemma~\ref{lemma.Chop Kill step} guarantees that $\set{\lambda_{P;m}}_{m=1}^{M}$ interlaces over $\set{\lambda_{P-1;m}}_{m=1}^{M}$, namely condition (iv).
\end{proof}

We conclude this section with a brief discussion of how we should combine the above arguments with those in the existing literature in order to explicitly compute the actual vectors $\set{\bfphi_n}_{n=1}^{N}$ of an $(\bfalpha,\bfmu)$-completion of a given positive semidefinite operator $\bfA$.
To be clear, this process requires an explicit knowledge of the eigenvalues $\set{\alpha_m}_{m=1}^{M}$ of $\bfA$ as well as their corresponding eigenvectors.
It does not depend on the particular initial vectors whose frame operator is $\bfA$, nor is that information useful to this process.

Given the initial spectrum $\set{\alpha_m}_{m=1}^{M}$ as well as the sequence $\set{\mu_n}_{n=1}^{N}$ of desired squared-lengths, the first step is to determine the spectrum $\set{\lambda_m}_{m=1}^{M}$ that we wish to achieve in the completion $\bfA+\sum_{n=1}^{N}\bfphi_n^{}\bfphi_n^*$.
As we have just finished showing, $\set{\lambda_m}_{m=1}^{M}$ can be any nonnegative nonincreasing sequence that satisfies~\eqref{equation.completion Schur-Horn} with $\lambda_m\geq\alpha_m$ for all $m$.
A natural choice for $\set{\lambda_m}_{m=1}^{M}$ is the optimal such spectrum; as shown in the next section, this can be computed using the algorithm of Theorem~\ref{theorem.main result 2}.
Once such a spectrum $\set{\lambda_m}_{m=1}^{M}$ has been chosen, the next step is to form a sequence of eigensteps $\set{\set{\lambda_{P;m}}_{m=1}^{M}}_{P=0}^{N}$ from $\set{\alpha_m}_{m=1}^{M}$ to $\set{\lambda_m}_{m=1}^{M}$.
There may be many different ways to do this.
It is not hard to see that the set of all such sequences of eigensteps forms a convex polytope in $\bbR^{M(N+1)}$.
However, to date, an explicit parametrization of this polytope has only been found in the special case where $\alpha_m=0$ for all $m$~\cite{FickusMPS13}.
Nevertheless, we do now know that one such sequence always exists: by Lemmas~\ref{lemma.chopped spectra} and~\ref{lemma.Chop Kill step}, we can form a suitable spectrum $\set{\lambda_{P-1;m}}_{m=1}^{M}$ by choosing it to have trace $\sum_{m=1}^{M}\alpha_m+\sum_{n=1}^{P-1}\mu_n$ and lie between two consecutive chopped spectra of $\set{\lambda_{P;m}}_{m=1}^{M}$.
Once the eigensteps $\set{\set{\lambda_{P;m}}_{m=1}^{M}}_{P=0}^{N}$ have been constructed, we then use them along with the techniques of~\cite{CahillFMPS13} to explicitly construct the completion's vectors $\set{\bfphi_n}_{n=1}^{N}$.
To do this, the best approach is to not go through the proof of Lemma~\ref{lemma.eigensteps} itself,
but rather verify that the arguments behind Theorems~2 and~7 of~\cite{CahillFMPS13} are still valid when the intial spectrum of zero is generalized to any nonnegative nonincreasing sequence $\set{\alpha_m}_{m=1}^{M}$.
To be precise, for any eigenvalue $\lambda\in\set{\lambda_{P-1;m}}_{m=1}^{M}$ of the operator $\bfA+\sum_{n=0}^{P-1}\bfphi_n^{}\bfphi_n^*$, the squared-norm of the component of $\bfphi_n$ that lies in the corresponding eigenspace is given by
\begin{equation*}
-\lim_{x\rightarrow\lambda}(x-\lambda)\frac{\prod_{m=1}^{M}(x-\lambda_{P;m})}{\prod_{m=1}^{M}(x-\lambda_{P-1;m})}.
\end{equation*}
The interested reader should see~\cite{Poteet12} for examples of this entire process.

\section{Constructing optimal completions: Proving Theorem~\ref{theorem.main result 2}}

In this section, we exploit the characterization of $(\bfalpha,\bfmu)$-completions given in Theorem~\ref{theorem.main result 1} to provide a simple recursive algorithm---explicitly given in Theorem~\ref{theorem.main result 2}---for computing the optimal such completion.
We begin with a brief motivation of the algorithm, then prove it indeed computes the optimal $(\bfalpha,\bfmu)$-completion, and conclude with a low-dimensional example of its application.

Our algorithm is recursive.
It computes the optimal completion $\set{\beta_m}_{m=1}^{M}$ by computing $\beta_M$, then $\beta_{M-1}$, then $\beta_{M-2}$, etc.
Following the intuition behind~\cite{FickusMPS13} and the previous section, we visualize $\set{\beta_m}_{m=1}^{M}$ as a pyramid with its smallest blocks at the top,
each eigenvalue $\beta_k$ providing a foundation for the levels $\set{\beta_m}_{m=k+1}^{M}$ above it.
From this perspective, the goal of our algorithm is to build a pyramid that is as steep as possible.

To better understand our approach, assume for the moment that for any given $k=1,\dotsc,M$ we have already computed the parts of this pyramid that lie above level $k$, namely $\set{\beta_m}_{m=k+1}^{M}$.
To be clear, in the $k=M$ case, we make no assumptions.
For any $t\in\bbR$, we define the \textit{$k$th intermediate optimal spectrum} $\set{\gamma_{k;m}(t)}_{m=1}^{M}$ as
\begin{equation}
\label{equation.proof of main result 2.1}
\gamma_{k;m}(t):=\left\{\begin{array}{ll}\beta_m,&k+1\leq m\leq M,\\\max\set{\alpha_m,t},&1\leq m\leq k.\end{array}\right.
\end{equation}
Essentially, the top of $\set{\gamma_{k;m}(t)}_{m=1}^{M}$ corresponds to the parts of the optimal spectrum $\set{\beta_m}_{m=1}^{M}$ that we have already computed, whereas the bottom is obtained by \textit{water filling}, a technique borrowed from the theory of communications; turning our pyramid on its side, values of the initial spectrum $\set{\alpha_m}_{m=1}^{M}$ that lie below the ``water level" $t$ are subsumed by $t$, while $\alpha_m$'s that lie above it remain unchanged.
To compute $\beta_k$, we keep increasing this water level $t$ until we get to a point where increasing it any more would result in an invalid completion.
That is, we let $\beta_k$ be the largest value of $t$ for which the $k$th intermediate spectrum $\set{\gamma_{k;m}(t)}_{m=1}^{M}$ satisfies the first $k$ inequality constraints of Theorem~\ref{theorem.main result 1}:
\begin{equation}
\label{equation.proof of main result 2.2}
\beta_k=\max\Biggset{t\in\bbR: \sum_{m=j}^{M}(\gamma_{k;m}(t)-\alpha_{m-j+1})^+\leq\sum_{n=j}^{N}\mu_j,\ \forall j=1,\dotsc,k}.
\end{equation}
Note here that in the $k$ iterate we do not need to explicitly require $\set{\gamma_{k;m}(t)}_{m=1}^{M}$ to satisfy the last $M-k$ such constraints;
using some of the analysis given below in the proof of Theorem~\ref{theorem.main result 2}, the curious reader can verify that they are automatically satisfied, though we omit this work and remain completely rigorous.
To simplify this expression for $\beta_k$,
note that for any $j,k=1,\dotsc,M$ with $j\leq k$, \eqref{equation.proof of main result 2.1} allows us to rewrite the constraint functions in~\eqref{equation.proof of main result 2.2} as
\begin{equation*}
\sum_{m=j}^{k}(\gamma_m(t)-\alpha_{m-j+1})^++\sum_{m=k+1}^{M}(\gamma_m(t)-\alpha_{m-j+1})^+
=\sum_{m=j}^{k}\bigparen{\max\set{\alpha_m,t}-\alpha_{m-j+1}}^++\sum_{m=k+1}^{M}(\beta_m-\alpha_{m-j+1})^+.
\end{equation*}
To simplify the first of these two sums, note that for any $m=j,\dotsc,k$,
\begin{equation*}
\bigparen{\max\set{\alpha_m,t}-\alpha_{m-j+1}}^+
=\bigparen{\max\set{\alpha_m-\alpha_{m-j+1},t-\alpha_{m-j+1}}}^+
=\max\set{0,\alpha_m-\alpha_{m-j+1},t-\alpha_{m-j+1}}.
\end{equation*}
Since $\set{\alpha_m}_{m=1}^{M}$ is nonincreasing, $\alpha_m-\alpha_{m-j+1}\leq0$, meaning this further simplifies to
\begin{equation*}
(\max\set{\alpha_m,t}-\alpha_{m-j+1})^+
=\max\set{0,t-\alpha_{m-j+1}}
=(t-\alpha_{m-j+1})^+.
\end{equation*}
In summary, for any $j,k=1,\dotsc,M$ with $j\leq k$,
\begin{equation*}
\sum_{m=j}^{M}(\gamma_m(t)-\alpha_{m-j+1})^+
=\sum_{m=j}^{k}(t-\alpha_{m-j+1})^++\sum_{m=k+1}^{M}(\beta_m-\alpha_{m-j+1})^+.
\end{equation*}
Combining this observation with~\eqref{equation.proof of main result 2.2} leads to the ``official" definition of $\beta_k$ given in Theorem~\ref{theorem.main result 2}.
The proof of Theorem~\ref{theorem.main result 2} is complicated, and as such, we write two components of it as separate lemmas.
In the first lemma, we provide an alternative perspective on the algorithm of Theorem~\ref{theorem.main result 2}
which allows us to prove that the spectrum $\set{\beta_m}_{m=1}^{M}$ is a well-defined $(\bfalpha,\bfmu)$-completion,
and also lays the groundwork for our subsequent results.

\begin{lemma}
\label{lemma.first lemma for optimal completions}
Let $\bfalpha=\{\alpha_m\}_{m=1}^M$ and $\bfmu=\{\mu_{n}\}_{n=1}^N$ be nonnegative and nonincreasing with $M\leq N$.
For any $k=1,\dotsc,M$, assume we have already constructed $\set{\beta_m}_{m=k+1}^M$ according to the algorithm of Theorem~\ref{theorem.main result 2}.
For any $j=1,\dotsc,k$ let
\begin{equation}
\label{equation.proof of main result 2.3}
f_{k;j}(t):=\sum_{m=j}^{k}(t-\alpha_{m-j+1})^++\sum_{m=k+1}^{M}(\beta_m-\alpha_{m-j+1})^+,
\qquad
\nu_j:=\sum_{n=j}^{N}\mu_n.
\end{equation}
Letting $f_{k;j}^{-1}(-\infty,\nu_j]$ denote the preimage of the interval $(-\infty,\nu_j]$ under the function $f_{k;j}:\bbR\rightarrow\bbR$,
there exists $b_{k;j}\in\bbR$ such that \smash{$f_{k;j}^{-1}(-\infty,\nu_j]=(-\infty,b_{k;j}]$}.
Also, the number $\beta_k$ given by Theorem~\ref{theorem.main result 2} can be expressed as
\begin{equation}
\label{equation.proof of main result 2.4}
\beta_k
=\max\bigset{t\in\bbR: f_{k;j}(t)\leq\nu_j,\ \forall j=1,\dotsc,k}
=\max\Biggset{\bigcap_{j=1}^{k}f_{k;j}^{-1}(-\infty,\nu_j]}
=\min\set{b_{k;j}}_{j=1}^{k}.
\end{equation}
In particular, $\set{\beta_m}_{m=1}^{M}$ is a well-defined $(\bfalpha,\bfmu)$-completion.
Moreover, $f_{k;j}(\beta_{k+1})=f_{k+1;j}(\beta_{k+1})$ whenever $1\leq j\leq k<M$.
\end{lemma}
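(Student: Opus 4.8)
The plan is to reduce the whole lemma to the single inequality $c_{k;j}\leq\nu_j$, where $c_{k;j}:=\sum_{m=k+1}^{M}(\beta_m-\alpha_{m-j+1})^+$ is the constant term of $f_{k;j}$, and to establish this by a downward induction on $k$. First I would record the elementary structure of $f_{k;j}(t)=\sum_{m=j}^{k}(t-\alpha_{m-j+1})^++c_{k;j}$: it is continuous, nondecreasing, convex and piecewise linear in $t$, equal to $c_{k;j}$ for $t\leq\alpha_{k-j+1}$, and strictly increasing on $[\alpha_{k-j+1},\infty)$ with $f_{k;j}(t)\to\infty$ as $t\to\infty$. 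Hence $f_{k;j}^{-1}(-\infty,\nu_j]$ is a closed interval, bounded above, nonempty exactly when $c_{k;j}\leq\nu_j$, and in that case of the form $(-\infty,b_{k;j}]$ with $b_{k;j}$ finite --- equal to $\alpha_{k-j+1}$ when $\nu_j=c_{k;j}$ and the unique point with $f_{k;j}(b_{k;j})=\nu_j$ when $\nu_j>c_{k;j}$. Granting $c_{k;j}\leq\nu_j$ for all $j\leq k$, the intersection $\bigcap_{j=1}^{k}f_{k;j}^{-1}(-\infty,\nu_j]$ equals $(-\infty,\min_{j}b_{k;j}]$, which, together with~\eqref{equation.proof of main result 2.3} and the definition of $\beta_k$ in Theorem~\ref{theorem.main result 2}, yields~\eqref{equation.proof of main result 2.4} and the finiteness of $\beta_k$.

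The inductive step carries the real content. For $k=M$ the sum defining $c_{M;j}$ is empty, so $c_{M;j}=0\leq\nu_j$ and $\beta_M$ is well-defined. Assume now $\beta_{k+1}=\min_{j'=1}^{k+1}b_{k+1;j'}$ has been constructed; since $\beta_{k+1}\leq b_{k+1;j}$ and each $f_{k+1;j}$ is nondecreasing, $f_{k+1;j}(\beta_{k+1})\leq\nu_j$ for all $j=1,\dotsc,k+1$. For $j\leq k$, retaining only the $m=k+1$ summand of $f_{k+1;j}$ gives $f_{k+1;j}(\beta_{k+1})\geq(\beta_{k+1}-\alpha_{k-j+2})^++c_{k+1;j}=c_{k;j}$, the last equality just peeling the $m=k+1$ term off of $c_{k;j}$; hence $c_{k;j}\leq\nu_j$ for $j=1,\dotsc,k$, which completes the induction and establishes the stated form of $f_{k;j}^{-1}(-\infty,\nu_j]$ and of $\beta_k$ in~\eqref{equation.proof of main result 2.4} at level $k$. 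The identity $f_{k;j}(\beta_{k+1})=f_{k+1;j}(\beta_{k+1})$ for $1\leq j\leq k<M$ is then immediate from $f_{k+1;j}(t)-f_{k;j}(t)=(t-\alpha_{k-j+2})^+-(\beta_{k+1}-\alpha_{k-j+2})^+$, which vanishes at $t=\beta_{k+1}$.

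It remains to verify that $\set{\beta_m}_{m=1}^{M}$ meets the hypotheses of Theorem~\ref{theorem.main result 1}. For $\beta_m\geq\alpha_m$ (which also gives nonnegativity), note that for $j\leq m$ one has $\alpha_m\leq\alpha_{m-j+1}$, so $f_{m;j}(\alpha_m)=c_{m;j}\leq\nu_j$; thus $\alpha_m$ lies in the feasible set at level $m$ and $\beta_m\geq\alpha_m$. Monotonicity $\beta_k\geq\beta_{k+1}$ for $k<M$ follows from the identity just proved: $f_{k;j}(\beta_{k+1})=f_{k+1;j}(\beta_{k+1})\leq\nu_j$ for $j\leq k$, so $\beta_{k+1}$ is feasible at level $k$. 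Directly from the definitions one identifies $\sum_{m=j}^{M}(\beta_m-\alpha_{m-j+1})^+=f_{j;j}(\beta_j)$, and since $\beta_j\leq b_{j;j}$ this is $\leq\nu_j$, giving the inequalities of~\eqref{equation.completion Schur-Horn} for every $j=1,\dotsc,M$; at $j=1$ the same identity reads $\sum_{m=1}^{M}(\beta_m-\alpha_m)^+=f_{1;1}(\beta_1)$, which by $\beta_m\geq\alpha_m$ equals $\sum_{m=1}^{M}(\beta_m-\alpha_m)$, and since $\beta_1=b_{1;1}$ with $f_{1;1}(b_{1;1})=\nu_1$ in both the case $\nu_1>c_{1;1}$ and the degenerate case $\nu_1=c_{1;1}$ (where $b_{1;1}=\alpha_1$), this equals $\nu_1=\sum_{n=1}^{N}\mu_n$. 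Theorem~\ref{theorem.main result 1} then certifies that $\set{\beta_m}_{m=1}^{M}$ is an $(\bfalpha,\bfmu)$-completion.

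I expect the main obstacle to be the chain $c_{k;j}\leq f_{k+1;j}(\beta_{k+1})\leq\nu_j$ in the inductive step: one has to recognize that the feasibility of the already-constructed $\beta_{k+1}$ against the level-$(k+1)$ constraints is precisely what controls the constant terms $c_{k;j}$ of the level-$k$ constraint functions, so that the recursion never stalls. Everything else is bookkeeping, though care is needed with empty sums, with the case $M=1$, and with the degenerate case $\nu_j=c_{k;j}$, in which $b_{k;j}$ sits at the kink $\alpha_{k-j+1}$ rather than on the strictly increasing part of $f_{k;j}$.
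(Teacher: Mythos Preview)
Your proof is correct and follows essentially the same approach as the paper's: both argue by downward induction on $k$, use the identity $f_{k;j}(\beta_{k+1})=f_{k+1;j}(\beta_{k+1})$ together with the feasibility of $\beta_{k+1}$ at level $k+1$ to push the construction to level $k$, then verify $\alpha_k\leq\beta_k$, monotonicity, the inequalities via $f_{k;k}(\beta_k)\leq\nu_k$, and the trace equality via $f_{1;1}(\beta_1)=\nu_1$. The only cosmetic difference is that you phrase nonemptiness as the bound $c_{k;j}\leq\nu_j$ on the constant term of $f_{k;j}$, whereas the paper instead exhibits a feasible point directly ($\alpha_M$ when $k=M$, $\beta_{k+1}$ when $k<M$); since $c_{k;j}=\min f_{k;j}\leq f_{k;j}(\beta_{k+1})=f_{k+1;j}(\beta_{k+1})\leq\nu_j$, these are the same argument in different clothing.
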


\begin{proof}
Our first step in proving that $\set{\beta_m}_{m=1}^{M}$ is the optimal $(\bfalpha,\bfmu)$-completion is to show that it is well-defined.
We prove this by induction.
In particular, for any $k=1,\dotsc,M$, we assume we have already constructed $\set{\beta_m}_{m=k+1}^M$ according to~\eqref{equation.proof of main result 2.4}, and show that the maximum that defines $\beta_k$ in~\eqref{equation.proof of main result 2.4} exists.
We take care to note that our argument will even be valid in the $k=M$ case; there, we make no assumptions whatsoever about $\set{\beta_m}_{m=1}^{M}$.
Having already constructed $\set{\beta_m}_{m=k+1}^M$, note that for any $j=1,\dotsc,k$, the corresponding $f_{k;j}$ function~\eqref{equation.proof of main result 2.3} is well-defined.

At this point, note that under this notation,
the expression for $\beta_k$ given in Theorem~\ref{theorem.main result 2} reduces to:
\begin{equation*}
\beta_k
=\max\bigset{t\in\bbR: f_{k;j}(t)\leq\nu_j,\ \forall j=1,\dotsc,k}
=\max\Biggset{\bigcap_{j=1}^{k}f_{k;j}^{-1}(-\infty,\nu_j]},
\end{equation*}
namely the first part of~\eqref{equation.proof of main result 2.4}.
To prove this set indeed has a maximum, we investigate the properties of the sets \smash{$\set{f_{k;j}^{-1}(-\infty,\nu_j]}_{j=1}^{k}$}.
Our first claim is that \smash{$f_{k;j}^{-1}(-\infty,\nu_j]$} is nonempty for any $j=1,\dotsc,k$.
That is, for any such $j$, we claim there exists some $t\in\bbR$ such that $f_{k;j}(t)\leq\mu_j$.
Indeed, when $k=M$ we can take $t=\alpha_M$: since $\set{\alpha_m}_{m=1}^{M}$ is nonincreasing and $\set{\mu_n}_{n=1}^{N}$ is nonnegative, \eqref{equation.proof of main result 2.3} gives \smash{$f_{M;j}(\alpha_M)=\sum_{m=j}^{M}(\alpha_M-\alpha_{m-j+1})^+=0\leq\nu_j$} for any $j=1,\dotsc,M$.
If on the other hand $k<M$, we can take $t=\beta_{k+1}$.
To see this, note that for any $j=1,\dotsc,k$, considering~\eqref{equation.proof of main result 2.3} when ``$k$" is $k+1$ gives
\begin{equation*}
f_{k;j}(\beta_{k+1})
=\sum_{m=j}^{k}(\beta_{k+1}-\alpha_{m-j+1})^++\sum_{m=k+1}^{M}(\beta_m-\alpha_{m-j+1})^+
=\sum_{m=j}^{k+1}(\beta_{k+1}-\alpha_{m-j+1})^++\sum_{m=k+2}^{M}(\beta_m-\alpha_{m-j+1})^+
=f_{k+1;j}(\beta_{k+1}),
\end{equation*}
as claimed in the statement of the lemma.
Looking at our inductive hypothesis~\eqref{equation.proof of main result 2.4} where ``$k$" is $k+1$, we see that $\beta_{k+1;j}$ is the maximum of the intersection of the sets $\set{f_{k+1;j}^{-1}(-\infty,\nu_j]}_{j=1}^{k+1}$.
In particular, it is a member of each of these sets, implying via the previous equation that $f_{k;j}(\beta_{k+1})=f_{k+1;j}(\beta_{k+1})\leq\nu_j$ for any $j=1,\dotsc,k$.
Thus, for any such $j$, $\beta_{k+1}\in f_{k;j}^{-1}(-\infty,\nu_j]$ and so $f_{k;j}^{-1}(-\infty,\nu_j]\neq\emptyset$ as claimed.

Having that $f_{k;j}^{-1}(-\infty,\nu_j]$ is nonempty for any $j=1,\dotsc,k$, we next note that for any such $j$ there exists $b_{k;j}\in\bbR$ such that \smash{$f_{k;j}^{-1}(-\infty,\nu_j]=(-\infty,b_{k;j}]$}.
Indeed, for any such $j$ the corresponding $f_{k;j}$ function~\eqref{equation.proof of main result 2.3} is clearly continuous, piecewise linear and nondecreasing with $\lim_{t\rightarrow\infty}f_{k;j}(t)=\infty$.
This last fact implies that $f_{k;j}^{-1}(-\infty,\nu_j]$ is bounded above which, coupled with its nonemptiness, implies its supremum $b_{k;j}$ exists.
Moreover, since $f_{k;j}$ is continuous this set is closed and this supremum is, in fact, a maximum.
At this point the monotonicity of $f_{k;j}$ implies that $f_{k;j}^{-1}(-\infty,\nu_j]=(-\infty,b_{k;j}]$.
Putting all of this together gives the rest of~\eqref{equation.proof of main result 2.4},
which among other things, ensures $\beta_k$ is well-defined:
\begin{equation*}
\beta_k
=\max\Biggset{\bigcap_{j=1}^{k}f_{k;j}^{-1}(-\infty,\nu_j]}
=\max\Biggset{\bigcap_{j=1}^{k}(-\infty,b_{k;j}]}
=\max\bigl(-\infty,\min\set{b_{k;j}}_{j=1}^{k}\bigr]
=\min\set{b_{k;j}}_{j=1}^{k}.
\end{equation*}
In particular, the iterative process given in the theorem statement will indeed produce a sequence $\set{\beta_m}_{m=1}^{M}$.
Moreover, recall from above that if $k<M$ then $\beta_{k+1}\in f_{k;j}^{-1}(-\infty,\nu_j]$ for all $j=1,\dotsc,k$.
Thus, \smash{$\beta_{k+1}\in\cap_{j=1}^{k}f_{k;j}^{-1}(-\infty,\nu_j]$} and so \smash{$\beta_{k+1}\leq\max\set{\cap_{j=1}^{k}f_{k;j}^{-1}(-\infty,\nu_j]}=\beta_k$}.
As such, \smash{$\set{\beta_m}_{m=1}^{M}$} is nonincreasing.

We now claim that $\set{\beta_m}_{m=1}^{M}$ is an $(\bfalpha,\bfmu)$-completion, namely that it satisfies $\alpha_m\leq\beta_m$ for all $m=1,\dotsc,M$ and moreover
the conditions~\eqref{equation.completion Schur-Horn} given in Theorem~\ref{theorem.main result 1}.
To show $\alpha_k\leq\beta_k$ for any $k=1,\dotsc,M$, recall that $\set{\alpha_m}_{m=1}^{M}$ is nonincreasing.
As such, for any $j=1,\dotsc,k$ and any $m=j,\dotsc,k$ we have $m+1\leq k+j$, implying $m-j+1\leq k$ and so $\alpha_{m-j+1}\geq\alpha_k$.
In particular, $(\alpha_k-\alpha_{m-j+1})^+=0\leq(\beta_k-\alpha_{m-j+1})^+$ for all such $m$ and so evaluating $f_{k;j}$~\eqref{equation.proof of main result 2.3} at $t=\alpha_k$ and $t=\beta_k$ gives
\begin{equation*}
f_{k;j}(\alpha_k)
=\sum_{m=j}^{k}(\alpha_k-\alpha_{m-j+1})^++\sum_{m=k+1}^{M}(\beta_m-\alpha_{m-j+1})^+
\leq\sum_{m=j}^{k}(\beta_k-\alpha_{m-j+1})^++\sum_{m=k+1}^{M}(\beta_m-\alpha_{m-j+1})^+
=f_{k;j}(\beta_k).
\end{equation*}
Moreover, recall from~\eqref{equation.proof of main result 2.4} that $\beta_k$ lies in the set \smash{$\cap_{j=1}^{k}f_{k;j}^{-1}(-\infty,\nu_j]$} being its maximum.
Thus, $f_{k;j}(\alpha_k)\leq f_{k;j}(\beta_k)\leq\nu_j$ for all $j=1,\dotsc,k$, meaning $\alpha_k$ also lies in \smash{$\cap_{j=1}^{k}f_{k;j}^{-1}(-\infty,\nu_j]$}, and is therefore no greater than its maximum.
That is, $\alpha_k\leq\beta_k$ for all $k=1,\dotsc,K$, as claimed.
Moreover, note that letting $j=k$ in the above discussion gives $f_{k;k}(\beta_k)\leq\nu_k$.
Considering~\eqref{equation.proof of main result 2.3} when $j=k$, this inequality becomes the $k$th necessary inequality of Theorem~\ref{theorem.main result 1}:
\begin{equation*}
\sum_{m=k}^{M}(\beta_m-\alpha_{m-k+1})^+
=\sum_{m=k}^{k}(\beta_k-\alpha_{m-k+1})^++\sum_{m=k+1}^{M}(\beta_m-\alpha_{m-k+1})^+
=f_{k;k}(\beta_k)
\leq\nu_k
=\sum_{n=k}^{N}\mu_n.
\end{equation*}
Finally, to prove that $\set{\beta_m}_{m=1}^{M}$ also satisfies the equality condition of Theorem~\ref{theorem.main result 1}, consider~\eqref{equation.proof of main result 2.4} when $k=1$, namely that $\beta_1$ is defined to be $\beta_1=\max\bigset{t\in\bbR: f_{1;1}(t)\leq\nu_1}$.
Being a member of this set, $\beta_1$ necessarily satisfies $f_{1;1}(\beta_1)\leq\nu_1$.
Moreover, if $f_{1;1}(\beta_1)<\nu_1$ then since $f_{1;1}$ is continuous, we would have $f_{1;1}(\beta_1+\varepsilon)<\nu_1$ for all sufficiently small $\varepsilon>0$, contradicting the definition of $\beta_1$.
Thus, $f_{1;1}(\beta_1)=\nu_1$ and considering~\eqref{equation.proof of main result 2.3} when $j=k=1$ gives our desired equality:
\begin{equation*}
\sum_{m=1}^M(\beta_m-\alpha_m)
=\sum_{m=1}^M(\beta_m-\alpha_m)^+
=(\beta_1-\alpha_1)^++\sum_{m=2}^{M}(\beta_m-\alpha_m)^+
=f_{1;1}(\beta_1)
=\nu_1
=\sum_{n=1}^{N}\mu_n.\qedhere
\end{equation*}
\end{proof}

Having that $\set{\beta_m}_{m=1}^{M}$ is a well-defined $(\bfalpha,\bfmu)$-completion, all that remains to be shown is that $\set{\beta_m}_{m=1}^{M}$ is minimal.
That is, letting $\set{\lambda_m}_{m=1}^{M}$ be any $(\bfalpha,\bfmu)$-completion we show that $\set{\beta_m}_{m=1}^{M}\preceq\set{\lambda_m}_{m=1}^{M}$.
Since both sequences sum to $\sum_{m=1}^{M}\alpha_m+\sum_{n=1}^{N}\mu_n$ by definition, this reduces to demonstrating that
\begin{equation}
\label{equation.proof of main result 2.8}
\sum_{m=j}^{M}\lambda_m\leq\sum_{m=j}^{M}\beta_m,\quad\forall j=1,\dotsc,M.
\end{equation}

Before proving~\eqref{equation.proof of main result 2.8} itself, we first develop a better understanding of $\set{\beta_m}_{m=1}^{M}$.
For any given $k=1,\dotsc,M$, recall from earlier in this proof that for any $j=1,\dotsc,k$, there exists $b_{j,k}\in\bbR$ such that \smash{$f_{k;j}^{-1}(-\infty,\nu_j]=(-\infty,b_{k;j}]$}.
This led to~\eqref{equation.proof of main result 2.4}, namely that $\beta_k=\min\set{b_{k;j}}_{j=1}^{k}$.
Some members of the sequence $\set{b_{k;j}}_{j=1}^{k}$ will equal this minimum, while others will not; in the following result, we prove some special properties of the smallest index $j$ that does.

\begin{lemma}
\label{lemma.second lemma for optimal completions}
Following the same hypotheses and notation as Lemma~\ref{lemma.first lemma for optimal completions}, let
\begin{equation}
\label{equation.proof of main result 2.9}
j(k):=\min\calJ(k),
\qquad
\calJ(k):=\set{j=1,\dotsc,k: b_{k;j}=\beta_k}=\Bigset{j=1,\dotsc,k: \max\bigset{f_{k;j}^{-1}(-\infty,\nu_j]}=\beta_k}.
\end{equation}
The set $\calJ(k)$ and index $j(k)$ have the following three properties:
\begin{enumerate}
\renewcommand{\labelenumi}{(\alph{enumi})}
\item
$f_{k;j}(\beta_k)=\nu_j$ for all $j\in\calJ(k)$.
\item
$\alpha_{k-j(k)+1}\leq\beta_k$ for all $k=1,\dotsc,M$.
\item
$j(k)\leq j(k+1)$ for all $k=1,\dotsc,M-1$.
\end{enumerate}
\end{lemma}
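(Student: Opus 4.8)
The plan is to establish the three properties in the order (a), (b), (c), using the earlier parts (and the facts already recorded in Lemma~\ref{lemma.first lemma for optimal completions}) in the later ones. Throughout I use that each $f_{k;j}$ is continuous, piecewise linear and nondecreasing with $\lim_{t\to\infty}f_{k;j}(t)=\infty$, that $f_{k;j}^{-1}(-\infty,\nu_j]=(-\infty,b_{k;j}]$, that $\set{\beta_m}_{m=1}^{M}$ is nonincreasing, that $\beta_k=\min\set{b_{k;j}}_{j=1}^{k}$, and that $f_{k;j}(\beta_{k+1})=f_{k+1;j}(\beta_{k+1})$ for $1\le j\le k<M$. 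Property (a) is immediate: for $j\in\calJ(k)$ we have $b_{k;j}=\beta_k$, and since $b_{k;j}$ is the maximum of the set $f_{k;j}^{-1}(-\infty,\nu_j]$ the value $f_{k;j}(b_{k;j})$ is at most $\nu_j$; if it were strictly less, continuity would give $f_{k;j}(b_{k;j}+\varepsilon)<\nu_j$ for small $\varepsilon>0$, contradicting maximality. This is the same argument used for $\beta_1$ in the proof of Lemma~\ref{lemma.first lemma for optimal completions}, and in fact it shows $f_{k;j}(b_{k;j})=\nu_j$ for \emph{every} $j=1,\dots,k$, a remark I will reuse.

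For (b), the key observation is that $f_{k;j(k)}$ is constant on the ray $(-\infty,\alpha_{k-j(k)+1}]$: for $t$ in this ray and any $m=j(k),\dots,k$ one has $\alpha_{m-j(k)+1}\ge\alpha_{k-j(k)+1}\ge t$, so every positive part $(t-\alpha_{m-j(k)+1})^+$ in~\eqref{equation.proof of main result 2.3} vanishes and only the $t$-independent tail $\sum_{m=k+1}^{M}(\beta_m-\alpha_{m-j(k)+1})^+$ survives. If we had $\beta_k<\alpha_{k-j(k)+1}$, then $\beta_k$ would lie in the interior of this ray, so $f_{k;j(k)}$ would take the constant value $f_{k;j(k)}(\beta_k)\le\nu_{j(k)}$ on all of $(-\infty,\alpha_{k-j(k)+1}]$, giving $(-\infty,\alpha_{k-j(k)+1}]\subseteq f_{k;j(k)}^{-1}(-\infty,\nu_{j(k)}]=(-\infty,b_{k;j(k)}]$ and hence $b_{k;j(k)}\ge\alpha_{k-j(k)+1}>\beta_k$, contradicting $b_{k;j(k)}=\beta_k$. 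Thus $\alpha_{k-j(k)+1}\le\beta_k$.

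For (c), write $j':=j(k+1)$. If $j'=k+1$ then $j(k)\le k<j'$ and there is nothing to prove, so assume $j'\le k$. I would first show $f_{k;j'}(\beta_k)=\nu_{j'}$: property (a) at level $k+1$ gives $f_{k+1;j'}(\beta_{k+1})=\nu_{j'}$, whence $f_{k;j'}(\beta_{k+1})=f_{k+1;j'}(\beta_{k+1})=\nu_{j'}$ by the identity of Lemma~\ref{lemma.first lemma for optimal completions} (valid since $1\le j'\le k<M$); since $\beta_{k+1}\le\beta_k$ and $f_{k;j'}$ is nondecreasing, $f_{k;j'}(\beta_k)\ge\nu_{j'}$, and a strict inequality would force $\beta_k>b_{k;j'}$, contradicting $\beta_k=\min\set{b_{k;i}}_{i=1}^{k}\le b_{k;j'}$. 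So $f_{k;j'}(\beta_k)=\nu_{j'}$, i.e.\ $\beta_k\le b_{k;j'}$. If $\beta_k=b_{k;j'}$ then $j'\in\calJ(k)$, so $j(k)\le j'$ and we are done. Otherwise $\beta_k<b_{k;j'}$; since $f_{k;j'}$ is nondecreasing with $f_{k;j'}(\beta_k)=\nu_{j'}=f_{k;j'}(b_{k;j'})$ (using the remark from (a)), $f_{k;j'}$ is constant on the nondegenerate interval $[\beta_k,b_{k;j'}]$, and as in the proof of (b) a sum $\sum_{m=j'}^{k}(t-\alpha_{m-j'+1})^+$ plus a constant can be constant on such an interval only if the interval lies in $(-\infty,\alpha_{k-j'+1}]$, so $\beta_k<b_{k;j'}\le\alpha_{k-j'+1}$. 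Together with (b) this gives $\alpha_{k-j(k)+1}\le\beta_k<\alpha_{k-j'+1}$, and since $\set{\alpha_m}_{m=1}^{M}$ is nonincreasing the strict inequality $\alpha_{k-j(k)+1}<\alpha_{k-j'+1}$ forces $k-j(k)+1>k-j'+1$, i.e.\ $j(k)<j'$. In either case $j(k)\le j(k+1)$.

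The bulk of the work---the monotonicity, continuity, and sublevel-set structure of the $f_{k;j}$, together with the identity $f_{k;j}(\beta_{k+1})=f_{k+1;j}(\beta_{k+1})$---is already in hand from Lemma~\ref{lemma.first lemma for optimal completions}, so I expect the only real obstacle to be the combinatorial bookkeeping in (c): keeping straight which constraint index is binding at each level, and, in the hard sub-case, extracting $b_{k;j'}\le\alpha_{k-j'+1}$ from the flatness of $f_{k;j'}$ between $\beta_k$ and $b_{k;j'}$ so that (b) can be applied. Geometrically, the water level $\beta_k$ is either pinned exactly by constraint $j'$ (so that same index is binding at level $k$) or already sits at or below the top of the $(k-j'+1)$-st step of the $\bfalpha$-pyramid, in which case the binding constraint at level $k$ must carry a strictly smaller index.
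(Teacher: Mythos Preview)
Your proof is correct. Part (a) matches the paper's argument exactly. For (b), the paper introduces an auxiliary index $m(k)$ (the unique $m$ with $\alpha_m\le\beta_k<\alpha_{m-1}$), proves $j(k)\le k-m(k)+1$ by contradiction, and then deduces $\alpha_{k-j(k)+1}\le\alpha_{m(k)}\le\beta_k$; your direct argument that $f_{k;j(k)}$ is constant on $(-\infty,\alpha_{k-j(k)+1}]$ accomplishes the same thing without the intermediary, and is cleaner. For (c), the approaches genuinely diverge: the paper's key lemma is that $f_{k;i}-f_{k;j}$ is nondecreasing whenever $i\le j$, which it combines with (a) and the identity $f_{k;j(k+1)}(\beta_{k+1})=f_{k+1;j(k+1)}(\beta_{k+1})$ to derive a contradiction from $j(k+1)<j(k)$. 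You instead first pin down $f_{k;j'}(\beta_k)=\nu_{j'}$, then split on whether $\beta_k=b_{k;j'}$; in the hard sub-case you exploit the flatness of $f_{k;j'}$ on $[\beta_k,b_{k;j'}]$ to force $\beta_k<\alpha_{k-j'+1}$ and invoke (b). Your route is more geometric and reuses (b) nicely, whereas the paper's difference-monotonicity lemma is a self-contained algebraic tool that may have independent utility; both yield the result with comparable effort.
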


\begin{proof}
From~\eqref{equation.proof of main result 2.4}, note that $\beta_k$ is the largest value of $t$ for which $f_{k;j}(t)\leq \nu_j$ for all $j=1,\dotsc,k$, namely for which the $k$th intermediate spectrum~\eqref{equation.proof of main result 2.1} will satisfy the first $k$ inequality conditions of $(\bfalpha,\bfmu)$-completions given in Theorem~\ref{theorem.main result 1}.
That is, $\calJ(k)$ consists of those indices $j$ for which even slightly increasing $t$ beyond $\beta_k$ will violate $f_{k;j}(t)\leq \nu_j$.
Indeed, for any $j=1,\dotsc,k$ we have $j\in\calJ(k)$ if and only if \smash{$f_{k;j}^{-1}(-\infty,\nu_j]=(-\infty,\beta_k]$};
since preimages preserve set complements this happens precisely when \smash{$f_{k;j}^{-1}(\nu_j,\infty)=(\beta_k,\infty)$}, meaning~\eqref{equation.proof of main result 2.9} can be equivalently expressed as
\begin{equation}
\label{equation.proof of main result 2.10}
j(k)=\min\calJ(k),
\qquad
\calJ(k)=\set{j=1,\dotsc,k: f_{k;j}(t)>\nu_j,\ \forall t>\beta_k}.
\end{equation}
Note that for any $j\in\calJ(k)$, \eqref{equation.proof of main result 2.9} gives $f_{k;j}(\beta_k)\leq\nu_{j}$ while~\eqref{equation.proof of main result 2.10} gives $f_{k;j}(t)>\nu_{j}$ for all $t>\beta_k$.
Since each $f_{k;j}$ is continuous, this implies $f_{k;j}(\beta_k)=\nu_j$ for all such $j$, namely (a).

We next prove (b).
This claim can be viewed as a strengthening of the $\alpha_k\leq\beta_k$ inequality we proved earlier.
To prove it, recall that for any $k=1,\dotsc,M$ we have $\beta_k\geq\beta_M\geq\alpha_M$.
Since $\set{\alpha_m}_{m=1}^{M}$ is nonincreasing, there thus exists a unique index $m(k)$ such that $1\leq m(k)\leq M$ and such that $\alpha_{m(k)}\leq\beta_k<\alpha_{m(k)-1}$,
provided we adopt the convention of defining $\alpha_0:=\infty$.
To prove~(b), we first show that $k$, $j(k)$ and $m(k)$ are all related by the following inequality:
\begin{equation}
\label{equation.proof of main result 2.13}
j(k)\leq k-m(k)+1,\quad\forall k=1,\dotsc,M.
\end{equation}
Note that since $j(k)\leq k$ by definition~\eqref{equation.proof of main result 2.9}, it suffices to consider the case where $m(k)\geq 2$.
Assume to the contrary that $k-m(k)+1<j(k)$,
and note that for all $m=j(k),\dotsc,k$ we have $m-m(k)+2\leq k-m(k)+2\leq j(k)$, implying $m-j(k)+1\leq m(k)-1$ and so $\alpha_{m(k)-1}\leq\alpha_{m-j(k)+1}$.
In particular, for all $m=j(k),\dotsc,k$ we have $(t-\alpha_{m-j(k)+1})^+=0$ for all $t\leq\alpha_{m(k)-1}$.
Thus, considering~\eqref{equation.proof of main result 2.3} at $j=j(k)$, we see that for any $t\leq\alpha_{m(k)-1}$,
\begin{equation*}
f_{k;j(k)}(t)
=\sum_{m=j(k)}^{k}(t-\alpha_{m-j(k)+1})^++\sum_{m=k+1}^{M}(\beta_m-\alpha_{m-j+1})^+
=\sum_{m=k+1}^{M}(\beta_m-\alpha_{m-j+1})^+.
\end{equation*}
That is, the function $f_{k;j(k)}$ is necessarily constant over all $t\leq \alpha_{m(k)-1}$.
Since this includes $\beta_k$ by the definition of $m(k)$, we have $f_{k;j(k)}(\alpha_{m(k)-1})=f_{k;j(k)}(\beta_k)\leq\nu_{j(k)}$,
meaning \smash{$\alpha_{m(k)-1}\in f_{k;j(k)}^{-1}(-\infty,\mu_{j(k)}]$}.
But by~\eqref{equation.proof of main result 2.9}, $j(k)\in\calJ(k)$  meaning
\smash{$\beta_k=\max\bigset{f_{k;j(k)}^{-1}(-\infty,\nu_j(k)]}\geq\alpha_{m(k)-1}$}, a contradiction of the fact that $\beta_k<\alpha_{m(k)-1}$.
Thus~\eqref{equation.proof of main result 2.13} is indeed true.
Rewriting~\eqref{equation.proof of main result 2.13} as $m(k)\leq k-j(k)+1$, claim (b) follows immediately from the definition of $m(k)$ and the fact that $\set{\alpha_m}_{m=1}^{M}$ is nonincreasing:
$\alpha_{k-j(k)+1}\leq\alpha_{m(k)}\leq\beta_k$.

Finally, we prove (c).
Our argument relies on a more basic fact, namely that $f_{k;i}-f_{k;j}$ is nondecreasing for any $k=1,\dotsc,M$ and any $i\leq j\leq k$.
Indeed, for any such $i$, $j$ and $k$, \eqref{equation.proof of main result 2.3} gives
\begin{align*}
f_{k;i}(t)-f_{k;j}(t)
&=\sum_{m=i}^{k}(t-\alpha_{m-i+1})^++\sum_{m=k+1}^{M}(\beta_m-\alpha_{m-i+1})^+-\sum_{m=j}^{k}(t-\alpha_{m-j+1})^+-\sum_{m=k+1}^{M}(\beta_m-\alpha_{m-j+1})^+\\
&=\sum_{m=i}^{j-1}(t-\alpha_{m-i+1})^++\sum_{m=j}^{k}[(t-\alpha_{m-i+1})^+-(t-\alpha_{m-j+1})^+]+\sum_{m=k+1}^{M}[(\beta_m-\alpha_{m-i+1})^+-(\beta_m-\alpha_{m-j+1})^+],
\end{align*}
where all summands are nondecreasing: the summands of the first and third sum are clearly nondecreasing and, since $i\leq j$ implies $\alpha_{m-i+1}\leq\alpha_{m-j+1}$, the summands of the second sum, namely
\begin{equation*}
(t-\alpha_{m-i+1})^+-(t-\alpha_{m-j+1})^+
=\left\{\begin{array}{ll}
0,&t\leq\alpha_{m-i+1},\\
t-\alpha_{m-i+1},&\alpha_{m-i+1}\leq t\leq\alpha_{m-j+1},\\
\alpha_{m-j+1}-\alpha_{m-i+1},&\alpha_{m-j+1}\leq t,
\end{array}\right.
\end{equation*}
are nondecreasing as well.
Returning to the claim (c) that $j(k)\leq j(k+1)$ for any $k=1,\dotsc,M-1$, assume to the contrary that $j(k+1)<j(k)$, implying $f_{k;j(k+1)}-f_{k;j(k)}$ is nondecreasing.
In particular, for any $t>\beta_k$ we can evaluate $f_{k;j(k+1)}-f_{k;j(k)}$ at $\beta_k$ and $t$ to obtain
$f_{k;j(k+1)}(\beta_k)-f_{k;j(k)}(\beta_k)\leq f_{k;j(k+1)}(t)-f_{k;j(k)}(t)$ or equivalently, that
\begin{equation*}
f_{k;j(k+1)}(\beta_k)+f_{k;j(k)}(t)\leq f_{k;j(k+1)}(t)+f_{k;j(k)}(\beta_k),\quad\forall t>\beta_k.
\end{equation*}
At this point, recall that since $j(k)\in\calJ(k)$, (a) gives $f_{k;j(k)}(\beta_k)=\nu_{j(k)}$ while~\eqref{equation.proof of main result 2.10} gives $f_{k;j(k)}(t)>\nu_{j(k)}$ for all $t>\beta_k$.
Thus, the previous inequality implies that
\begin{equation*}
f_{k;j(k+1)}(\beta_k)+\nu_{j(k)}
< f_{k;j(k+1)}(\beta_k)+f_{k;j(k)}(t)
\leq f_{k;j(k+1)}(t)+f_{k;j(k)}(\beta_k)
=f_{k;j(k+1)}(t)+\nu_{j(k)},
\quad\forall t>\beta_k,
\end{equation*}
namely that $f_{k;j(k+1)}(\beta_k)<f_{k;j(k+1)}(t)$ for all $t>\beta_k$.
Moreover, since $f_{k;j(k+1)}$ is a nondecreasing function and $\beta_{k+1}\leq\beta_k$ we know $f_{k;j(k+1)}(\beta_{k+1})\leq f_{k;j(k+1)}(\beta_k)$.
Also, since $j(k+1)<j(k)\leq k$ we can let ``$j$" be $j(k+1)$ in the final conclusion of Lemma~\ref{lemma.first lemma for optimal completions} to obtain $f_{k;j(k+1)}(\beta_{k+1})=f_{k+1;j(k+1)}(\beta_{k+1})$.
And, since $j(k+1)\in\calJ(k+1)$, (a) gives $f_{k+1;j(k+1)}(\beta_{k+1})=\mu_{j(k+1)}$.
Putting this all together, we see that
\begin{equation*}
\mu_{j(k+1)}
=f_{k+1;j(k+1)}(\beta_{k+1})
=f_{k;j(k+1)}(\beta_{k+1})
\leq f_{k;j(k+1)}(\beta_k)
<f_{k;j(k+1)}(t),
\quad\forall t>\beta_k.
\end{equation*}
Since $f_{k;j(k+1)}(t)>\mu_{j(k+1)}$ for all $t>\beta_k$, \eqref{equation.proof of main result 2.10} gives $j(k+1)\in\calJ(k)$ and so $j(k+1)\geq\min\calJ(k)=j(k)$, a contradiction of the assumption that $j(k+1)<j(k)$.
\end{proof}

Having Lemmas~\ref{lemma.first lemma for optimal completions} and~\ref{lemma.second lemma for optimal completions}, we prove our second main result:

\begin{proof}[Proof of Theorem~\ref{theorem.main result 2}]
Recall from Lemma~\ref{lemma.first lemma for optimal completions} that the algorithm of Theorem~\ref{theorem.main result 2} produces a well-defined $(\bfalpha,\bfmu)$-completion $\set{\beta_m}_{m=1}^{M}$.
As noted above, all that remains to be shown is that $\set{\beta_m}_{m=1}^{M}\preceq\set{\lambda_m}_{m=1}^{M}$ for any $(\bfalpha,\bfmu)$-completion $\set{\lambda_m}_{m=1}^{M}$, namely \eqref{equation.proof of main result 2.8}.
In light of the iterative definition of $\set{\beta_m}_{m=1}^{M}$, we prove~\eqref{equation.proof of main result 2.8} by induction, beginning with $j=M$ and working backwards to $j=1$.
In particular, for any $k=1,\dotsc,M$, assume we have already shown~\eqref{equation.proof of main result 2.8} holds whenever $k+1\leq j\leq M$; we show that it also holds for $j=k$.
As with our inductive argument for Lemma~\ref{lemma.first lemma for optimal completions}, our techniques below will even be valid in the $j=M$ case; in that case, we assume nothing about the optimality of $\set{\beta_m}_{m=1}^{M}$.

Note that if $\lambda_k\leq\beta_k$, the case of~\eqref{equation.proof of main result 2.8} with $j=k$ immediately follows from the $j=k+1$ case:
\begin{equation*}
\sum_{m=k}^{M}\lambda_m
=\lambda_k+\sum_{m=k+1}^{M}\lambda_m
\leq\beta_k+\sum_{m=k+1}^{M}\beta_m
=\sum_{m=k}^{M}\beta_m.
\end{equation*}
As such, assume $\lambda_k>\beta_k$.
Since $\set{\lambda_m}_{m=1}^{M}$ is an $(\bfalpha,\bfmu)$-completion, Theorem~\ref{theorem.main result 1} and~\eqref{equation.proof of main result 2.3} imply
\begin{equation*}
\sum_{m=j}^{M}(\lambda_m-\alpha_{m-j+1})^+
\leq\sum_{n=j}^{N}\mu_n
=\nu_j,
\end{equation*}
for any $j=1,\dotsc,M$.
Consider this inequality in the case where $j$ is the index $j(k)$ given in~\eqref{equation.proof of main result 2.9}.
In this case, recall that since $j(k)\in\calJ(k)$, Lemma~\ref{lemma.second lemma for optimal completions}(a) gives $f_{k;j(k)}(\beta_k)=\nu_{j(k)}$.
Putting these facts together with the explicit formula~\eqref{equation.proof of main result 2.3} for $f_{k;j(k)}(\beta_k)$ gives
\begin{equation*}
\sum_{m=j(k)}^{M}(\lambda_m-\alpha_{m-j(k)+1})^+
\leq\nu_{j(k)}
=f_{k;j(k)}(\beta_k)
=\sum_{m=j(k)}^{k}(\beta_k-\alpha_{m-j(k)+1})^++\sum_{m=k+1}^{M}(\beta_m-\alpha_{m-j(k)+1})^+.
\end{equation*}
Rewriting the right-hand side above by grouping the $m=k$ term with the second sum instead of the first gives
\begin{equation*}
\sum_{m=j(k)}^{M}(\lambda_m-\alpha_{m-j(k)+1})^+
=\sum_{m=j(k)}^{k-1}(\beta_k-\alpha_{m-j(k)+1})^++\sum_{m=k}^{M}(\beta_m-\alpha_{m-j(k)+1})^+.
\end{equation*}
To continue, note that since $\set{\lambda_m}_{m=1}^{M}$ is nonincreasing, $\beta_k<\lambda_k\leq\lambda_m$ for all $m=1,\dotsc,k$.
In particular, for any $m$ such that $j(k)\leq m\leq k-1$ we know $(\beta_k-\alpha_{m-j(k)+1})^+\leq(\lambda_m-\alpha_{m-j(k)+1})^+$ and so the previous equality implies
\begin{equation*}
\sum_{m=j(k)}^{M}(\lambda_m-\alpha_{m-j(k)+1})^+
\leq\sum_{m=j(k)}^{k-1}(\lambda_m-\alpha_{m-j(k)+1})^++\sum_{m=k}^{M}(\beta_m-\alpha_{m-j(k)+1})^+.
\end{equation*}
Subtracting common terms from both sides of this inequality and then noting $x\leq x^+$ for all $x\in\bbR$ gives
\begin{equation}
\label{equation.proof of main result 2.16}
\sum_{m=k}^{M}(\lambda_m-\alpha_{m-j(k)+1})
\leq\sum_{m=k}^{M}(\lambda_m-\alpha_{m-j(k)+1})^+
\leq\sum_{m=k}^{M}(\beta_m-\alpha_{m-j(k)+1})^+.
\end{equation}
To continue, recall from Lemma~\ref{lemma.second lemma for optimal completions}(b) that $\alpha_{m-j(m)+1}\leq \beta_m$ for all $m=1,\dotsc,M$.
Further recalling that $\set{j(k)}_{k=1}^{M}$ is nondecreasing, for any $m=k,\dotsc,M$ we have $j(k)\leq j(m)$ implying $m-j(m)+1\geq m-j(k)+1$ and so $\alpha_{m-j(k)+1}\leq\alpha_{m-j(m)+1}$.
Together, these facts about $\set{j(k)}_{k=1}^{M}$ imply $\alpha_{m-j(k)+1}\leq\beta_m$ for all $m=k,\dotsc,M$, implying \eqref{equation.proof of main result 2.16} can be further simplified as
\begin{equation*}
\sum_{m=k}^{M}(\lambda_m-\alpha_{m-j(k)+1})
\leq\sum_{m=k}^{M}(\beta_m-\alpha_{m-j(k)+1})^+
=\sum_{m=k}^{M}(\beta_m-\alpha_{m-j(k)+1}).
\end{equation*}
Subtracting common terms from both sides gives that the inductive hypothesis is also true at $j=k$:
\begin{equation}
\label{equation.proof of main result 2.17}
\sum_{m=k}^{M}\lambda_m\leq\sum_{m=k}^{M}\beta_m.
\end{equation}
Thus, \eqref{equation.proof of main result 2.8} indeed holds for all $k=1,\dotsc,M$, meaning $\set{\beta_m}_{m=1}^{M}\preceq\set{\lambda_m}_{m=1}^{M}$ for any $(\bfalpha,\bfmu)$-completion $\set{\lambda_m}_{m=1}^{M}$.
To be clear, in the initial case where $j=M$, the above inductive argument assumes nothing about $\set{\beta_m}_{m=1}^{M}$.
In this case, it shows that if $\lambda_M>\beta_M$ then~\eqref{equation.proof of main result 2.17} holds for $k=M$, namely that $\lambda_M\leq\beta_M$.
As such, in the initial case, this argument reduces to a proof by contradiction that $\lambda_M\leq\beta_M$.
\end{proof}

To highlight the utility of Theorem~\ref{theorem.main result 2}, we now use it to compute an example of an optimal completion.
\begin{example}
\label{example}
Consider a $4\times 4$ self-adjoint matrix $\bfA$ whose spectrum is
\begin{equation*}
\bfalpha=\set{\alpha_1,\alpha_2,\alpha_3,\alpha_4}=\set{\tfrac{7}{4},\tfrac{3}{4},\tfrac{1}{2},\tfrac{1}{2}}.
\end{equation*}
From~\cite{FickusMPS13}, we know that $\bfA$ is the frame operator for infinitely many frames for $\bbR^4$ or $\bbC^4$ consisting of $4$ or more frame vectors.
Regardless of what particular frame led to $\bfA$, suppose we can add any $N=5$ additional vectors to this frame, the only restriction being that they have squared-norms of
\begin{equation*}
\bfmu=\set{\mu_{1},\mu_{2},\mu_{3},\mu_{4},\mu_{5}}=\set{2,1,\tfrac14,\tfrac14,\tfrac14}.
\end{equation*}
How should we pick these vectors so that the resulting frame is as tight as possible?  Or so that it has minimal frame potential, or alternatively, minimal mean squared reconstruction error?
As discussed in the introduction, Theorem~\ref{theorem.main result 2} shows that all of these questions have the same answer;
we explicitly construct an $(\bfalpha,\bfmu)$-completion $\set{\beta_m}_{m=1}^{M}$ that is majorized by all other $(\bfalpha,\bfmu)$-completions.
To be precise, for any $k=1,\dotsc,M$ we compute $\beta_k$ from $\set{\beta_m}_{m=k+1}^{M}$ by defining \smash{$f_{k;j}(t)=\sum_{m=j}^{k}(t-\alpha_{m-j+1})^++\sum_{m=k+1}^{4}(\beta_m-\alpha_{m-j+1})^+$} for all $j=1,\dotsc,k$ and $t\in\bbR$ and letting
$\beta_k:=\min\set{t : f_{k;j}(t)\leq \sum_{n=j}^{5}\mu_n,\ \forall j=1,\dotsc, k}$.
In particular, $\beta_4$ is the largest value of $t$ that satisfies the four constraints:
\begin{align*}
f_{4;1}(t)&=(t-\tfrac74)^++(t-\tfrac34)^++(t-\tfrac12)^++(t-\tfrac12)^+\leq\tfrac{15}{4},\\
f_{4;2}(t)&=(t-\tfrac74)^++(t-\tfrac34)^++(t-\tfrac12)^+\leq\tfrac{7}{4},\\
f_{4;3}(t)&=(t-\tfrac74)^++(t-\tfrac34)^+\leq\tfrac{3}{4},\\
f_{4;4}(t)&=(t-\tfrac74)^+\leq\tfrac{1}{2}.
\end{align*}
Here, each of the constraints can be explicitly written in terms of a piecewise linear function.
For example,
\begin{equation*}
f_{4;1}(t)=\left\{\begin{array}{cl}0,& \multicolumn{1}{r}{t<\frac12,}\smallskip\\2t-1,&\frac12\leq t<\frac34,\smallskip\\3t-\frac74 &\frac34\leq t<\frac74,\smallskip\\4t-\frac72, &\frac74\leq t,\end{array}\right.
\end{equation*}
at which point basic arithmetic reveals that the interval $(-\infty, \frac{29}{16}]$ is the set of points $t$ such that $f_{4;1}(t)\leq\frac{15}{4}$.
Similarly, the second, third and fourth constraints above correspond to the intervals $(-\infty, \frac32]$, $(-\infty, \frac32]$, and $(-\infty, \frac94]$, respectively.
The largest point that lies in all four intervals is $\beta_4:=\frac32$.
Note that here, as in general, it is possible that $\beta_k$ achieves several constraints simultaneously; while this has no effect on the algorithm, this phenomenon is the source of some of the technicalities of the proof of Theorem~\ref{theorem.main result 2} related to the index $j(k)$ defined in~\eqref{equation.proof of main result 2.9}.

Having $\beta_4=\frac32$ allows us to define $f_{3;1}$, $f_{3;2}$ and $f_{3;3}$ and moreover compute $\beta_3$ as the largest $t$ such that
\begin{align*}
f_{3;1}(t)&=(t-\tfrac74)^++(t-\tfrac34)^++(t-\tfrac12)^++1\leq\tfrac{15}{4},\\
f_{3;2}(t)&=(t-\tfrac74)^++(t-\tfrac34)^++1\leq\tfrac{7}{4},\\
f_{3;3}(t)&=(t-\tfrac74)^++\tfrac34\leq\tfrac{3}{4},
\end{align*}
namely $\beta_3:=\max\set{(-\infty, \frac{23}{12}]\cap(-\infty, \frac32]\cap(-\infty, \frac74]}=\frac32$.
Since $\set{\beta_3,\beta_4}=\set{\frac32,\frac32}$ we next have
\begin{align*}
f_{2;1}(t)&=(t-\tfrac74)^++(t-\tfrac34)^++1+1\leq\tfrac{15}4,\\
f_{2;2}(t)&=(t-\tfrac74)^++\tfrac34+1\leq\tfrac74,
\end{align*}
and so $\beta_2:=\max\set{(-\infty,\frac{17}8]\cap(-\infty,\frac74]}=\frac74$.
Finally, since $\set{\beta_2,\beta_3,\beta_4}=\set{\frac74,\frac32,\frac32}$,
\begin{align*}
\beta_1:=\max\set{t :f_{1;1}(t)=(t-\tfrac74)^++1+1+1\leq\tfrac{15}4}=\max(-\infty,\tfrac52]=\tfrac52.
\end{align*}

To summarize, in this example the optimal $(\bfalpha,\bfmu)$-completion is the spectrum $\set{\beta_1,\beta_2,\beta_3,\beta_4}=\set{\frac52,\frac74,\frac32,\frac32}$.
Note Theorem~\ref{theorem.main result 2} alone does not tell us how to explicitly construct the completion's corresponding frame vectors,
namely vectors $\set{\bfphi_n}_{n=1}^{5}$ in $\bbF^4$ with $\norm{\bfphi_n}^2=\mu_n$ for all $n$ and such that $\bfA+\sum_{n=1}^{5}\bfphi_n^{}\bfphi_n^*$ has spectrum $\set{\beta_m}_{m=1}^{4}$.
To do that, we can employ the techniques of the previous section, repeatedly applying Lemma~\ref{lemma.Chop Kill step} to take eigensteps backwards from $\set{\beta_m}_{m=1}^{4}$ to $\set{\alpha_m}_{m=1}^{4}$, and then apply the main results of~\cite{CahillFMPS13} to construct $\set{\bfphi_n}_{n=1}^{5}$ from these eigensteps; see~\cite{Poteet12} for examples of this process.
\end{example}

We conclude by briefly discussing a way to implement the algorithm of Theorem~\ref{theorem.main result 2} in general, and in so doing, obtain an upper bound on its computational complexity.
We first compute \smash{$\sum_{n=j}^{N}\mu_n$} for all $j=1,\dotsc,M$.
This can be done using $O(N)$ operations: first find \smash{$\sum_{n=M}^{N}\mu_n$} and then \smash{$\sum_{n=j}^{N}\mu_n=\mu_j+\sum_{n=j+1}^{N}\mu_n$} for all $j=M-1,\dotsc,1$.
Next, for any given $k=M,\dotsc,1$, assume we have already computed $\set{\beta_m}_{m=k+1}^{M}$; we assume nothing in the case where $k=M$.
For any given $j=1,\dotsc,k$, we use at most $O(M)$ operations to compute \smash{$\delta_{k,j}:=\sum_{n=j}^{N}\mu_n-\sum_{m=k+1}^{M}(\beta_m-\alpha_{m-j+1})^+$}.
For this particular $k$ and $j$, we then compute the largest value of $t$ for which \smash{$\sum_{m=j}^{k}(t-\alpha_{m-j+1})^+\leq\delta_{k,j}$}.
A na\"{i}ve implementation of this step involves $O(M^2)$ operations,
yielding $O(M^4+N)$ operations overall.

For a more computationally efficient alternative, note that making the change of variables $l=m-j+1$ gives \smash{$\sum_{m=j}^{k}(t-\alpha_{m-j+1})^+=\sum_{l=1}^{k-j+1}(t-\alpha_l)^+$}.
Indeed, as seen in the previous example, the same piecewise linear functions used in the $k=4$ step reappear in the $k=3,2,1$ steps.
We can exploit this redundancy by performing an out-of-loop computation that evaluates \smash{$g_m(t):=\sum_{l=1}^{m}(t-\alpha_l)^+$} at $t=\alpha_i$ for all $i,m=1,\dotsc,M$.
This has a one-time cost of only $O(M^2)$ operations.
And, returning to our loop, it allows us to quickly find the largest $t$ for which \smash{$g_{k-j+1}(t)=\sum_{m=j}^{k}(t-\alpha_{m-j+1})^+\leq\delta_{k,j}$}.
To be precise, note $g_{k-j+1}$ is nondecreasing, continuous and piecewise linear.
Further note that it only transitions between pieces at points that lie in the nonincreasing sequence $\set{\alpha_i}_{i=1}^{M}$.
As such, taking the smallest index $i$ for which the precomputed value $g_{k-j+1}(\alpha_i)$ is at most $\delta_{k,j}$,
we know the $t$ we seek lies in the interval $[\alpha_i,\alpha_{i-1})$, where $\alpha_0:=\infty$.
Moreover, for $t\in[\alpha_i,\alpha_{i-1})$ the fact that $\set{\alpha_m}_{m=1}^{M}$ is nonincreasing implies $t\geq \alpha_{m-j+1}$ precisely when $m\geq i+j-1$.
Thus, for all $t\in[\alpha_i,\alpha_{i-1})$ we have \smash{$g_{k-j+1}(t)=\sum_{m=j}^{k}(t-\alpha_{m-j+1})^+=\sum_{m=i+j-1}^{k}(t-\alpha_{m-j+1})$}.
In this form, it only takes $O(M)$ operations to find the unique $t\in[\alpha_i,\alpha_{i-1})$ such that $g_{k-j+1}(t)=\delta_{k,j}$.

To summarize, if we are willing to spend $O(M^2)$ operations up front, then for each $k=1,\dotsc,M$ and every $j=1,\dotsc,k$, finding the largest value of $t$ such that \smash{$\sum_{m=j}^{k}(t-\alpha_{m-j+1})^+\leq\delta_{k,j}$} only requires at most $O(M)$ operations.
As such, for each $k=1,\dotsc,M$, finding $\beta_k$ as the minimum of these values of $t$ over all choices of $j=1,\dotsc,k$ requires at most $O(Mk)$ operations.
Summing these over all $k=1,\dotsc,M$, we see an optimal $(\bfalpha,\bfmu)$-completion can be computed in at most $O(M^3+N)$ operations.

\section*{Acknowledgments}
We thank the two anonymous reviewers for their many helpful comments and suggestions.
This work was partially supported by NSF DMS 1042701 and NSF DMS 1321779.
The views expressed in this article are those of the authors and do not reflect the official policy or position of the United States Air Force, Department of Defense, or the U.S.~Government.

\end{document}